\documentclass[twoside,12pt]{article} 
\usepackage[margin=1.0in]{geometry}

\usepackage{amssymb,amsmath,amsthm}
\usepackage{mathtools}
\usepackage{bm,lscape}
\usepackage{bbm}
\usepackage{mathrsfs,dsfont}
\usepackage{wasysym}
\RequirePackage[
    colorlinks=true,
    linkcolor=blue,
    urlcolor=blue,
    citecolor=blue]{hyperref}
\RequirePackage{natbib}
\usepackage[utf8]{inputenc}
\usepackage[english]{babel}
\usepackage{algorithm}
\usepackage{algpseudocode}

\usepackage{graphicx}
\usepackage{color}
\usepackage{rotating}
\usepackage{authblk}
\usepackage{appendix}

\allowdisplaybreaks

\newcommand{\dR}{\mathbb{R}}

\newcommand{\dP}{\mathbb{P}}
\newcommand{\cB}{\mathcal{B}}
\newcommand{\cF}{\mathcal{F}}
\newcommand{\cL}{\mathcal{L}}

\newcommand{\dE}{\mathbb{E}}
\newcommand{\rI}{\mathrm{I}}

\allowdisplaybreaks

%%%%%%%%%%%%%%%%%%%%%%%%%%%%%%%%%%%%%%%%%%%%%%%%%
%%%%%%%%%%%%%%%%%%%%%%%%%%%%%%%%%%%%%%%%%%%%%%%%%

%% mathbb

%% mathds

\newcommand{\E}{\mathds{E}}

\renewcommand{\P}{\mathds{P}}

%% mathsf

%%% Script letters

%%% Mathfrak

\newcommand{\ddr}{\mathrm{d}}

\newtheorem{thm}{Theorem}[section]

\newtheorem{remark}[thm]{Remark}

% Keywords command
\providecommand{\keywords}[1]
{
  \small	
  \textbf{\textit{Keywords:}} #1
}

\begin{document}

\title{A martingale approach to Gaussian fluctuations and laws of iterated logarithm for Ewens-Pitman model}

% \author{
% Emanuele Dolera\\
% \texttt{emanuele.dolera@unipv.it}\\
% Department of Mathematics\\ 
% University of Pavia, Italy\\
% \and
% Stefano Favaro\\
% \texttt{stefano.favaro@unito.it}\\
% Department of Economics and Statistics\\ 
% University of Torino and Collegio Carlo Alberto, Italy\\
% \and
% Stefano Peluchetti\\
% \texttt{speluchetti@cogent.co.jp}\\
% Cogent Labs, Tokyo, Japan
% }

\author[1]{Bernard Bercu\thanks{bernard.bercu@math.u-bordeaux.fr}}
\author[2]{Stefano Favaro\thanks{stefano.favaro@unito.it}}
\affil[1]{\small{Institut de Math\'ematiques de Bordeaux, Universit\'e de Bordeaux, France}}
\affil[2]{\small{Department of Economics and Statistics, University of Torino and Collegio Carlo Alberto, Italy}}

\maketitle

\begin{abstract}
The Ewens-Pitman model refers to a distribution for random partitions of $[n]=\{1,\ldots,n\}$, which is indexed by a pair of parameters 
$\alpha \in [0,1)$ and $\theta>-\alpha$, with $\alpha=0$ corresponding to the Ewens model in population genetics. The large $n$ asymptotic properties of the Ewens-Pitman model have been the subject of numerous studies, with the focus being on the number $K_{n}$ of partition sets and the number $K_{r,n}$ of partition subsets of size $r$, for $r=1,\ldots,n$. While for $\alpha=0$ asymptotic results have been obtained in terms of almost-sure convergence and Gaussian fluctuations, for $\alpha\in(0,1)$ only almost-sure convergences are available, with the proof for $K_{r,n}$ being given only as a sketch. In this paper, we make use of martingales to develop a unified and comprehensive treatment of the large $n$ asymptotic behaviours of $K_{n}$ and $K_{r,n}$ for $\alpha\in(0,1)$, providing alternative, and rigorous, proofs of the almost-sure convergences of $K_{n}$ and $K_{r,n}$, and covering the gap of Gaussian fluctuations. We also obtain new laws of the iterated logarithm for $K_{n}$ and $K_{r,n}$.
\end{abstract}

\keywords{Almost-sure limit; Ewens-Pitman model; exchangeable random partition; Gaussian fluctuation; law of iterated logarithm; martingale; Mittag-Leffler distribution}

%%%%%%%%%%%%%%%%%%%%%%%%%%%%%%%%
%%%%%%%%%%%%%%%%%%%%%%%%%%%%%%%%
%%%%%%%%%%%%%%%%%%%%%%%%%%%%%%%%
%%%%%%%%%%%%%%%%%%%%%%%%%%%%%%%%

\section{Introduction}

The Ewens-Pitman (EP) model refers to a distribution for random partitions, introduced by \citet{Pit(95)} as a generalization of the celebrated Ewens model in population genetics \citep{Ewe(72)}. For $n\in\mathbb{N}$, consider a random partition of $[n]=\{1,\ldots,n\}$ into $K_{n}\in\{1,\ldots,n\}$ partition subsets $\{A_1, \ldots,A_{K_n}\}$
of corresponding sizes $\mathbf{N}_{n}=(N_{1,n},\ldots,N_{K_{n},n})$, where $N_{k,n}$ is the cardinal number of set $A_k$, for $k=1,\ldots, K_{n}$, such that 
\begin{equation*}
n=\sum_{k=1}^{K_{n}}N_{k,n}.
\end{equation*}
For any real numbers $\alpha$ in $[0,1)$ and $\theta>-\alpha$, the EP model assigns to the random vector $(K_{n},\mathbf{N}_{n})$ the probability 
\begin{equation}\label{epsm}
\P(K_{n}=k,\mathbf{N}_{n}=(n_{1},\ldots,n_{k}))=\frac{n!}{k!}\frac{\left(\frac{\theta}{\alpha}\right)^{(k)}}{(\theta)^{(n)}}\prod_{i=1}^{k}\frac{\alpha(1-\alpha)^{(n_{i}-1)}}{n_{i}!}
\end{equation}
where, for any $a\in \dR$, $(a)^{(n)}$ stands for the rising factorial of $a$ of order $n$, that is $(a)^{(n)}=a(a+1)\cdots(a+n-1)$. Denote by $K_{r,n}$ the number of partition subsets of size $r$, given for all $r=1,\ldots,n$, by
\begin{equation*}
K_{r,n}=\sum_{k=1}^{K_{n}}\rI_{\{N_{k,n}=r\}}
\end{equation*}
where $\rI$ is the indicator function. One can easily see that
\begin{equation*}
n=\sum_{r=1}^{n} rK_{r,n}
\hspace{1cm}\text{and}\hspace{1cm}
K_{n}=\sum_{r=1}^{n}K_{r,n}.
\end{equation*}
The distribution of $\mathbf{K}_{n}=(K_{1,n},\ldots,K_{n,n})$ is known as EP sampling formula or frequency-of-frequencies distribution, and it follows by means of a combinatorial rearrangement of \eqref{epsm}. The Ewens model is recovered from \eqref{epsm} by setting $\alpha=0$. We refer the reader to \citet{Pit(95),Pit(06)} for more details. The joint distribution \eqref{epsm} admits several constructions, with the most common being a sequential or generative construction through the Chinese restaurant process \citep{Pit(95),Fen(98),Zab(05)} and a Poisson process construction through random sampling from the Pitman-Yor random probability measure \citep{Per(92),Pit(97)}; see also \citet{Dol(21)} for a construction through a class of negative Binomial compound Poisson sampling models \citep{Cha(07)}. The EP model plays a critical role in a variety of research fields, such as population genetics, Bayesian nonparametric statistics, excursion theory, combinatorics, machine learning and statistical physics. We refer to the monograph by \citet[Chapter 3 and Chapter 4]{Pit(06)} for a comprehensive account on the EP model and generalizations thereof.

\vspace{1ex}
Under the EP model, there have been several studies on the large $n$ asymptotic behaviour of $K_{n}$ and the $K_{r,n}$'s; see \citet[Chapter 3]{Pit(06)}. In particular, for $\alpha=0$, \citet[Theorem 2.3]{Kor(73)} exploited the fact that $K_{n}$ is a sum of $n$ independent Bernoulli random variables, and showed that
\begin{equation}\label{as_e}
\lim_{n\rightarrow+\infty}\frac{K_{n}}{\log n}=\theta\qquad\text{a.s.}
\end{equation}
Furthermore, it follows directly from Lindeberg-L\'evy central limit theorem that
\begin{equation}\label{gf_e}
\frac{K_{n}-\theta\log n}{\sqrt{\log n}} 
\underset{n\rightarrow+\infty}{\overset{\cL}{\longrightarrow}}
\sqrt{\theta}\,\mathcal{N}(0,1),
\end{equation}
with $\mathcal{N}(0,1)$ being a standard Gaussian random variable. See \citet{Fen(98)} and references therein for some functional versions of both \eqref{as_e} and \eqref{gf_e}. As regard to the $K_{r,n}$'s, from \citet[Theorem 1]{Arr(92)} it follows that 
\begin{equation}\label{w_e_1}
K_{r,n}
\underset{n\rightarrow+\infty}{\overset{\cL}{\longrightarrow}}
Z_{r}
\end{equation}
and
\begin{equation}\label{w_e_2}
(K_{1,n},K_{2,n},\ldots)
\underset{n\rightarrow+\infty}{\overset{\cL}{\longrightarrow}}
(Z_{1},Z_{2},\ldots),
\end{equation}
where the $Z_{r}$'s are independent Poisson random variables with $\E[Z_{r}]=\theta/r$, for all $r\geq1$. We refer to \citet{Arr(03)} for various generalizations and refinements of these asymptotic results, and their interplay with combinatorics.

For any $\alpha\in(0,1)$, $K_{n}$ is no longer a sum of independent Bernoulli random variables. However, \citet[Theorem 3.8]{Pit(06)} made use of a martingale construction for $K_{n}$, based on a likelihood ratio defined through the distribution \eqref{epsm}, and applied the strong law of large numbers for martingales to prove that
\begin{equation}\label{as_ep_1}
\lim_{n\rightarrow+\infty}\frac{K_{n}}{n^{\alpha}}=S_{\alpha,\theta}\qquad\text{a.s.},
\end{equation}
where $S_{\alpha,\theta}$ is a positive and almost surely finite random variable. If $f_{\alpha}$ is the positive $\alpha$-Stable density function, then the distribution of $S_{\alpha,\theta}$ has density function 
\begin{equation}\label{pitman_div}
f_{S_{\alpha,\theta}}(s)=\frac{\Gamma(\theta+1)}{\alpha\Gamma(\theta/\alpha+1)}s^{\frac{\theta-1}{\alpha}-1}f_{\alpha}(s^{-1/\alpha}),
\end{equation}
where $\Gamma$ stands for the Euler Gamma function. Precisely, \eqref{pitman_div} is a generalization of the Mittag-Leffler density function, which is recovered by setting $\theta=0$ \citep{Zol(86)}. As regard to the $K_{r,n}$'s, from \citet[Lemma 3.11]{Pit(06)} it follows that
\begin{equation}\label{as_ep_2}
\lim_{n\rightarrow+\infty}\frac{K_{r,n}}{n^{\alpha}}=p_{\alpha}(r)S_{\alpha,\theta}\qquad\text{a.s.},
\end{equation}
where
\begin{displaymath}
p_{\alpha}(r)=\frac{\alpha(1-\alpha)^{(r-1)}}{r!}.
\end{displaymath}
The almost-sure convergences \eqref{as_ep_1} and \eqref{as_ep_2} are the natural counterparts of \eqref{as_e} and \eqref{w_e_1}, respectively, though at different scales. Instead, it is still an open problem of obtaining a counterpart of \eqref{gf_e}, as well as Gaussian fluctuations for $K_{r,n}$.

\subsection{Our contributions}

For $\alpha\in(0,1)$ and $\theta>-\alpha$, we make use of martingales to develop a unified and comprehensive treatment of the large $n$ asymptotic behaviours of $K_{n}$ and $K_{r,n}$, providing alternative, and rigorous, proofs of the almost-sure limits \eqref{as_ep_1} and \eqref{as_ep_2}, and covering the gap of Gaussian fluctuations. In particular, we propose a new martingale construction for $K_{n}$, simpler than that applied in \citet[Theorem 3.8]{Pit(06)}, which leads to an alternative proof of \eqref{as_ep_1}, still relying on the strong law of large numbers for martingales. Further, by exploiting our martingale construction, as well as \eqref{as_ep_1}, we show that 
\begin{equation}\label{cl_k}
\sqrt{n^\alpha} \left(\frac{K_n}{n^{\alpha}}- S_{\alpha,\theta} \right)
\underset{n\rightarrow+\infty}{\overset{\cL}{\longrightarrow}}
\sqrt{S_{\alpha,\theta}^\prime}\, \mathcal{N}(0,1),
\end{equation}
where $S^{\prime}_{\alpha,\theta}$ is random variable independent of $\mathcal{N}(0,1)$ and sharing the same distribution as $S_{\alpha,\theta}$. We also prove the law of the iterated logarithm for $K_{n}$, i.e.,
\begin{equation}
\label{lil_k}
\limsup_{n \rightarrow \infty} \frac{\big(K_n-n^{\alpha}S_{\alpha,\theta}\big)^2}{2n^\alpha \log \log n} 
=  S_{\alpha,\theta}\qquad\text{a.s.}
\end{equation}
Then, we extend our analysis to the $K_{r,n}$'s, for which only a sketch of the proof of \eqref{as_ep_2} is given in \citet[Lemma 3.11]{Pit(06)}. In particular, we introduce a martingale construction for $K_{r,n}$, providing a rigorous proof of \eqref{as_ep_2} by means of the strong law of large numbers for martingales. Further, by exploiting our martingale construction and \eqref{as_ep_2}, we establish a Gaussian fluctuation as well as the law of iterated logarithm for $K_{r,n}$. Critical for our study is the work of \citet{Hey(77)}, which provides sufficient conditions to achieve Gaussian fluctuations and laws of iterated logarithm from the martingale convergence theorem.

\subsection{Organization of the paper}

The paper is structured as follows. In Section \ref{sec1}, we present the martingale construction for $K_{n}$ and the proofs of the almost-sure convergence, as well as the $\mathbb{L}^{p}$ convergence, the Gaussian fluctuation, and the law of iterated logarithm. Section \ref{sec2} contains an analogous asymptotic analysis for $K_{r,n}$. In Section \ref{sec3}, we discuss some directions of future research. The appendices are devoted to the sequential construction of the EP model and an alternative proof, without relying on martingales, of the $\mathbb{L}^{2}$ convergence of $K_{n}$ and $K_{r,n}$.

%%%%%%%%%%%%%%%%%%%%%%%%%%%%%%%%
%%%%%%%%%%%%%%%%%%%%%%%%%%%%%%%%
%%%%%%%%%%%%%%%%%%%%%%%%%%%%%%%%
%%%%%%%%%%%%%%%%%%%%%%%%%%%%%%%%

\section{Asymptotic results for the number of partition sets}\label{sec1}
We start by introducing the keystone martingale construction for $K_{n}$. This is a critical tool at the basis of all the results in this section. From the sequential construction of the EP model \citep[Proposition 9]{Pit(95)}, for all $n \geq 1$,
\begin{equation}\label{sequential_k}
   \dP(K_{n+1}  = K_{n}+k\,|\,K_n) =\left \{ \begin{array}{ccc}
    {\displaystyle \frac{\alpha K_n+\theta}{n+\theta} }  & \text{ if } & k=1, \vspace{1ex}\\[0.4cm]
     {\displaystyle \frac{n -  \alpha K_n}{n+\theta} } & \text{ if }  & k=0.
   \end{array} \right.
\end{equation}
We refer the reader to Appendix A for more detDensity deconvolution addresses the estimation of the unknown (probability) density function $f$ of a random signal from data that are observed with an independent additive random noise. This is a classical problem in statistics, for which frequentist and Bayesian nonparametric approaches are available to deal with static or batch data. In this paper, we consider the problem of density deconvolution in a streaming or online setting where noisy data arrive progressively, with no predetermined sample size, and we develop a sequential nonparametric approach to estimate $f$. By relying on a quasi-Bayesian sequential approach, often referred to as Newton's algorithm, we obtain estimates of $f$ that are of easy evaluation, computationally efficient, and with a computational cost that remains constant as the amount of data increases, which is critical in the streaming setting. Large sample asymptotic properties of the proposed estimates are studied, yielding provable guarantees with respect to the estimation of $f$ at a point (local) and on an interval (uniform). In particular, we establish local and uniform central limit theorems, providing corresponding asymptotic credible intervals and bands. We validate empirically our methods on synthetic and real data, by considering the common setting of Laplace and Gaussian noise distributions, and make a comparison with respect to the kernel-based approach and a Bayesian nonparametric approach with a Dirichlet process mixture prior.ails on \eqref{sequential_k}. Equation \eqref{sequential_k} simply means that 
\begin{equation}\label{DECKN}
K_{n+1} = K_n + \xi_{n+1}
\end{equation}
where the conditional distribution of the random variable $\xi_{n+1}$, given the $\sigma$-algebra $\cF_n=\sigma(K_1, \ldots,K_n)$, is 
the Bernoulli $\cB(p_n)$ distribution with parameter
\begin{equation}\label{DEFPN}
p_n=\frac{\alpha K_n+\theta}{n+\theta}.
\end{equation} 
According to the above definition of the sequence $(\xi_{n})$, as $\dE[\xi_{n+1}\, |\, \cF_n]=p_n$, we have almost surely
\begin{equation}\label{CONDM1KN}
\dE[K_{n+1}\,|\,\cF_{n}]=\dE[K_{n}+\xi_{n+1}\,|\,\cF_{n}]=K_n+p_n=
\beta_{n}K_{n}+\frac{\theta}{n+\theta}
\end{equation}
where 
\begin{equation}
\label{DEFBETAN}
\beta_{n}=1+\frac{\alpha}{n+\theta}.
\end{equation}
Consequently, we obtain from \eqref{CONDM1KN} that almost surely
\begin{equation}
\label{CONDM1KNMN}
\dE\left[K_{n+1} + \frac{\theta}{\alpha}\,\Big \vert\,\cF_{n}\right]= \beta_n K_n + \frac{\theta}{n+\theta}+ \frac{\theta}{\alpha}= \beta_n \left(K_n+\frac{\theta}{\alpha}\right).
\end{equation}
Let $(b_{n})$ be the sequence defined by $b_{1}=1$ and for all $n\geq2$,
\begin{equation}\label{DEFBN}
b_{n}\!=\!\prod_{k=1}^{n-1}\beta_{k}^{-1}\!=\!\prod_{k=1}^{n-1}\left(\frac{k+\theta}{k+\alpha+\theta}\right)\!=\!
\left \{ \!\! \begin{array}{ccc}
    {\displaystyle \!\! \Big(\frac{\alpha+\theta}{\theta}\Big)\frac{(\theta)^{(n)}}{(\alpha+\theta)^{(n)}}}  & \text{ if } & \theta \neq 0, \vspace{1ex}\\[0.4cm]
     {\displaystyle \frac{\alpha (n-1)!}{(\alpha)^{(n)}} }& \text{ if }  & \theta=0.
   \end{array}  \right.
\end{equation}
Denote by $(M_n)$ the sequence of random variables defined, for all $n\geq 1$, by
\begin{equation}\label{DEFMN}
M_{n}=b_{n}K_{n}+\left(\frac{\alpha+\theta}{\alpha}\right)\frac{(\theta)^{(n)}}{(\alpha+\theta)^{(n)}}
=b_{n} \left(K_{n}+\frac{\theta}{\alpha}\right).
\end{equation}
Since $b_{n}=\beta_{n}b_{n+1}$, \eqref{CONDM1KNMN} immediately implies that for all $n\geq1$,
\begin{equation*}
\dE[M_{n+1}\,|\,\cF_{n}]=b_{n+1}\beta_n\left(K_{n}+\frac{\theta}{\alpha} \right)=b_{n}\left( K_{n}+\frac{\theta}{\alpha} \right)=M_{n}
\end{equation*}
almost surely. Moreover, $(M_n)$ is square integrable as $K_n \leq n$. Consequently, $(M_{n})$ is a locally square integrable martingale.
Moreover, noting that
$$
M_{n+1}-M_{n}=b_{n+1}\big(\xi_{n+1} - \dE[\xi_{n+1}\,|\,\cF_{n}]\big),
$$
we have almost surely
\begin{equation*}
\dE\big[(M_{n+1}\!-\!M_{n})^2\,|\,\cF_{n}\big]\!=\!b_{n+1}^2 \big(\dE\big[\xi_{n+1}^2\,|\,\cF_{n}\big]\!-\!\dE^2[\xi_{n+1}\,|\,\cF_{n}]\big) \!=\! b_{n+1}^2 p_n(1-p_n),
\end{equation*}
which leads, via \eqref{DEFPN}, to
\begin{equation}\label{COND2MN}
\dE\big[(M_{n+1}-M_{n})^2\,|\,\cF_{n}\big]=b_{n+1}^2 \left(\frac{(\theta + \alpha K_{n})(n-\alpha K_n)}{(n+\theta)^2}\right)\qquad \text{a.s.}
\end{equation}

\subsection{An alternative approach for the almost-sure convergence}

Based on the above martingale construction, we present an alternative proof of \eqref{as_e}, by relying on the strong law of large numbers for martingale. Our proof is more natural and intuitive than that of \citet[Theorem 3.8]{Pit(06)},  which makes use of the exchangeable partition probability function induced by the EP model.

\begin{thm}
\label{T-ASCVG}
Let $K_{n}$ be the number of partition sets in a random partition of $[n]$ distributed according to the EP model with $\alpha\in(0,1)$ and $\theta>-\alpha$. Then,
\begin{equation}\label{ASCVG}
\lim_{n \rightarrow+\infty}
\frac{K_{n}}{n^{\alpha}}=S_{\alpha,\theta}\qquad\text{a.s.}
\end{equation}
where $S_{\alpha,\theta}$ is a positive and almost surely finite random variable whose distribution has probability density function \eqref{pitman_div}. This convergence holds in $\mathbb{L}^p$ for any 
integer $p\geq 1$, 
\begin{equation}\label{ASCVGLP}
 \lim_{n \rightarrow+\infty} \dE\left[ \left| \frac{K_n}{n^{\alpha}} -S_{\alpha,\theta} \right|^p \right]=0.
\end{equation}
\end{thm}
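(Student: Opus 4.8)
The plan is to exploit the martingale $(M_n)$ from \eqref{DEFMN} together with the strong law of large numbers for martingales. First I would analyze the asymptotics of the deterministic sequence $(b_n)$: from \eqref{DEFBN} and the standard ratio asymptotics for Gamma functions, $(\theta)^{(n)}/(\alpha+\theta)^{(n)} = \Gamma(n+\theta)\Gamma(\alpha+\theta)/(\Gamma(\theta)\Gamma(n+\alpha+\theta)) \sim c\, n^{-\alpha}$ as $n\to\infty$, so that $b_n \sim C n^{-\alpha}$ for an explicit constant $C=C(\alpha,\theta)$ (with the $\theta=0$ case handled by the second branch, giving the same order). Next I would show $(M_n)$ converges almost surely and in $\mathbb{L}^2$ to a limit $M_\infty$: since $(M_n)$ is a nonnegative martingale it converges a.s.; for the $\mathbb{L}^2$ (hence uniform integrability and $\mathbb{L}^1$) statement, one sums the conditional variances \eqref{COND2MN}, bounding $b_{n+1}^2(\theta+\alpha K_n)(n-\alpha K_n)/(n+\theta)^2 \leq b_{n+1}^2 (\theta + \alpha K_n)/n \lesssim b_{n+1}^2 M_n / (n b_n) \lesssim n^{-\alpha-1} M_n$ up to constants (using $K_n \le n$ and $K_n \lesssim M_n/b_n$); taking expectations and using $\dE[M_n]=M_1$ finite gives $\sum_n \dE[(M_{n+1}-M_n)^2] < \infty$ because $\alpha>0$, so $(M_n)$ is bounded in $\mathbb{L}^2$ and the martingale converges in $\mathbb{L}^2$. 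Then \eqref{DEFMN} gives $K_n = (M_n - b_n\theta/\alpha)/b_n \cdot$, i.e. $K_n/n^\alpha = (M_n/b_n - \theta/\alpha)/n^\alpha \to M_\infty / C =: S_{\alpha,\theta}$ a.s., since $b_n n^\alpha \to C$ and the additive $\theta/\alpha$ term is negligible after division by $n^\alpha$.

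For the $\mathbb{L}^p$ convergence with general integer $p\ge1$, the plan is a Burkholder–Davis–Gundy or direct moment argument: from $M_n - M_1 = \sum_{k=1}^{n-1} b_{k+1}(\xi_{k+1}-p_k)$ with increments bounded by $b_{k+1}\le 1$ in absolute value and conditional variances summing to a bounded quantity, one shows $\sup_n \dE[M_n^p] < \infty$ for every $p$ — e.g. inductively, writing $M_{n+1}^p = (M_n + \Delta_{n+1})^p$, expanding, and controlling the cross terms via $|\Delta_{n+1}| \le b_{n+1}$ and $\dE[\Delta_{n+1}^2 \mid \cF_n] \le c\, n^{-\alpha-1} M_n$, so the increments of $\dE[M_n^p]$ are summable. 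Boundedness in $\mathbb{L}^p$ of the martingale plus a.s. convergence yields convergence in $\mathbb{L}^{p'}$ for all $p' < p$, hence for all $p$ since $p$ is arbitrary; translating back through $K_n/n^\alpha = M_n/(b_n n^\alpha) - \theta/(\alpha n^\alpha)$ and $b_n n^\alpha \to C > 0$ gives \eqref{ASCVGLP}.

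Finally, to identify the law of $S_{\alpha,\theta}$ as the density \eqref{pitman_div}, I would not re-derive it from scratch but instead invoke the a.s. convergence together with the known result that under the EP model $K_n/n^\alpha$ converges a.s. to the generalized Mittag-Leffler variable (Pitman, Theorem 3.8) — since the a.s. limit is unique, the $S_{\alpha,\theta}$ produced here must coincide in distribution; alternatively one can compute $\dE[S_{\alpha,\theta}^p] = \lim_n \dE[(K_n/n^\alpha)^p]$ using \eqref{ASCVGLP} together with the explicit product formula for $\dE[(K_n)_{(p)}]$ obtainable from \eqref{CONDM1KN} by iterating, and match the resulting moments $\Gamma(\theta+1)\Gamma(\theta/\alpha + p)/(\Gamma(\theta/\alpha+1)\Gamma(\theta+p\alpha + \text{?}))$-type expressions to the moments of \eqref{pitman_div}; the moment-matching route is self-contained and the moments of the generalized Mittag-Leffler law determine it uniquely since they grow slowly enough. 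The main obstacle I anticipate is the $\mathbb{L}^p$ bound for large $p$: the conditional-variance estimate $\dE[\Delta_{n+1}^2\mid\cF_n]\lesssim n^{-\alpha-1}M_n$ needs to be leveraged carefully through the binomial expansion so that all the error terms in $\dE[M_{n+1}^p]-\dE[M_n^p]$ are genuinely summable (using $\alpha > 0$), and one must be a little careful that the bound on $K_n$ in terms of $M_n/b_n$ does not lose the crucial factor of $n^{-\alpha-1}$ — using both $K_n \le n$ and $K_n \le M_n/b_n$ and interpolating is the cleanest way to keep everything under control.
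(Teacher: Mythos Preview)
Your overall strategy is sound and aligns with the paper's: use the martingale $(M_n)$, show $\mathbb{L}^p$-boundedness, and translate back via $b_n n^\alpha \to C$. There are, however, two genuine differences worth noting.

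For the $\mathbb{L}^2$ and $\mathbb{L}^p$ bounds you take a more ``martingale-analytic'' route: you exploit the identity $\theta+\alpha K_n = \alpha M_n/b_n$ to get the pointwise estimate $\dE[\Delta M_{n+1}^2\mid\cF_n]\lesssim n^{-\alpha-1}M_n$, and then run an induction on $p$ through the binomial expansion of $(M_n+\Delta_{n+1})^p$, using $|\Delta_{n+1}|\le b_{n+1}$ to reduce higher powers of the increment to the conditional variance. This is clean and economical, and it does give $\sup_n\dE[M_n^p]<\infty$ for every $p$. The paper instead computes the falling factorial moments $\dE[(K_n)_{(p)}]$ in closed form (via Vandermonde's identity and the recursion \eqref{CONDM1KN}), obtaining $\dE[(K_n)_{(p)}]=\big(\tfrac{\theta}{\alpha}\big)^{(p)}(\theta)^{(n)\,-1}\sum_{k}(-1)^{p-k}\binom{p}{k}(k\alpha+\theta)^{(n)}$, and then bounds $\dE[M_n^p]$ through the product inequality $b_n(p)\,b_n^p\le 1$. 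This is more laborious but does double duty: the same explicit formula yields $\lim_n n^{-\alpha p}\dE[K_n^p]=\dE[S_{\alpha,\theta}^p]$ exactly, which the paper then matches to the moments of the density \eqref{pitman_div} (and concludes $\mathbb{L}^p$ convergence via a Scheff\'e-type argument rather than via uniform integrability).

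This brings up the one genuine weak point in your plan: the identification of the law of $S_{\alpha,\theta}$. Your first option, invoking Pitman's Theorem~3.8, is circular in the present context since the point of the theorem is to give an alternative proof of exactly that result. Your second option --- computing $\dE[(K_n)_{(p)}]$ from the recursion and matching moments --- is precisely what the paper does, and it is not a trivial computation; your inductive $\mathbb{L}^p$ bound does not produce those explicit moments for free, so you would still need to carry out the paper's falling-factorial calculation (or an equivalent one) to close this gap. In short: your boundedness argument is a legitimate, arguably slicker, alternative, but for the distribution identification you end up needing the paper's explicit moment computation anyway.
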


\begin{proof}
We already saw that $(M_{n})$ is a locally square integrable martingale.
Denote by $\langle M \rangle_n$ its predictable quadratic variation,
\begin{equation*}
\langle M \rangle_n  = \sum_{k=0}^{n-1} \dE[(M_{k+1}-M_k)^2\,|\,\cF_{k}].
\end{equation*}
We have from \eqref{COND2MN} that
\begin{equation}\label{PQVMN}
\langle M \rangle_n  =\left(\frac{\alpha+\theta}{\alpha}\right)^2+\sum_{k=1}^{n-1} b_{k+1}^2 \left(\frac{(\theta + \alpha K_{k})(k-\alpha K_k)}{(k+\theta)^2}\right)\qquad\text{a.s.}
\end{equation}
Taking the expectation on both sides of \eqref{PQVMN}, we obtain that
\begin{equation*}
\dE\big[\langle M \rangle_n\big]  =\left(\frac{\alpha+\theta}{\alpha}\right)^2+\sum_{k=1}^{n-1} b_{k+1}^2 
\left(\frac{\theta k + \alpha(k-\theta) \dE[K_{k}] -\alpha^2 \dE[K_k^2]}{(k+\theta)^2}\right).
\end{equation*}
Hence, it exists some constant $C>0$ such that for $n$ large enough,
\begin{equation}\label{BDPQVMN}
\dE\big[\langle M \rangle_n\big]  \leq C \sum_{k=1}^{n} \frac{b_{k}^2 \dE[K_{k}]}{k}.
\end{equation}
Moreover, we have from \eqref{CONDM1KN} and \eqref{DEFBN} that
if $\theta \neq 0$,
\begin{equation}\label{MOM1KN}
\dE[K_n]=b_n^{-1}\sum_{k=0}^{n-1} \frac{(\theta)^{(k)}}{(\alpha+\theta+1)^{(k)}}
=\frac{\theta}{\alpha}\left( \frac{(\alpha+ \theta)^{(n)} - (\theta)^{(n)}}{(\theta)^{(n)}}\right),
\end{equation}
whereas if $\theta=0$,
\begin{equation}\label{MOM1KN0}
\dE[K_n]=\prod_{k=1}^{n-1} \left(1 +\frac{\alpha}{k} \right)
= \frac{(\alpha)^{(n)}}{\alpha (n-1)!}.
\end{equation}
In addition, it follows from standard results on the asymptotic behavior of the 
Euler Gamma function that for $\theta \neq 0$,
\begin{equation}\label{CVGBNNN}
\lim_{n\rightarrow+\infty} \frac{1}{n^\alpha}
\frac{(\alpha+ \theta)^{(n)}}{(\theta)^{(n)}}=
\lim_{n\rightarrow+\infty} \frac{\Gamma(\theta +1) \Gamma(n+\alpha+\theta)}{n^\alpha 
\theta \Gamma(\alpha + \theta) \Gamma(n+\theta)}
=\frac{\Gamma(\theta+1)}{\theta \Gamma(\alpha + \theta)}.
\end{equation}
Hence, we obtain from \eqref{MOM1KN}, \eqref{MOM1KN0} and \eqref{CVGBNNN} that 
whatever the value of $\theta>-\alpha$,
\begin{equation*}
\lim_{n\rightarrow+\infty}\frac{1}{n^\alpha}\dE[K_{n}]= 
\frac{\Gamma(\theta+1)}{\alpha \Gamma(\alpha+ \theta)}.
\end{equation*}
Consequently, we deduce from \eqref{BDPQVMN}, \eqref{DEFBN} and \eqref{CVGBNNN} that
\begin{equation}\label{SUPPQVMN}
\sup_{n\geq 1} \dE\big[\langle M \rangle_n\big] <+\infty.
\end{equation}
Therefore, we deduce from \eqref{SUPPQVMN} that $(M_n)$ is bounded in $\mathbb{L}^2$.
Then, it follows from Doob's martingale convergence theorem \citep[Corollary 2.2]{Hal(80)}, that the sequence $(M_n)$ 
converges a.s. and in $\mathbb{L}^2$ to a square integrable random variable $M_{\alpha,\theta}$.
In fact, we shall prove below that $(M_n)$ is bounded in $\mathbb{L}^p$ for any integer $p \geq 1$
which means that $(M_n)$ converges a.s. and in $\mathbb{L}^p$ to $M_{\alpha,\theta}$
for any integer $p \geq 1$. Hereafter, we obtain from \eqref{DEFMN} and \eqref{CVGBNNN} that
\begin{equation*}
\lim_{n \rightarrow+\infty}
\frac{K_{n}}{n^{\alpha}}=S_{\alpha,\theta}\qquad\text{a.s.},
\end{equation*}
where the random variables $M_{\alpha,\theta}$ and $S_{\alpha,\theta}$ are tightly related by means of the identity, which follows from \eqref{CVGBN} below,
$$M_{\alpha,\theta}=\Big( \frac{\Gamma(\alpha+\theta+1)}{\Gamma(\theta+1)}\Big)S_{\alpha,\theta},$$
completing the proof of \eqref{ASCVG}. We shall now proceed to the proof of the convergence
in $\mathbb{L}^p$ for any integer $p \geq 1$. In particular, we are going to compute the 
falling factorial moments $\dE[(K_n)_{(p)}]$ of $K_n$ where, for any $a \in \dR$, $(a)_{(p)}=a(a-1)\cdots(a-p+1)$ 
with $(a)_{(0)}=1$. By an application of Vandermonde identity for the falling factorial together with \eqref{DECKN}, for any integer $p \geq 1$,
$$
(K_{n+1})_{(p)}=(K_n+\xi_{n+1})_{(p)}= \sum_{k=0}^{p}{p\choose k}(K_{n})_{(k)}(\xi_{n+1})_{(p-k)}.
$$
By taking the conditional expectation on both sides of this identity, we obtain that
\begin{equation}\label{CONDMKNP}
\dE[(K_{n+1})_{(p)}\,|\,\cF_{n}]=\sum_{k=0}^{p}{p\choose k}(K_{n})_{(k)}\dE[(\xi_{n+1})_{(p-k)}\,|\,\cF_{n}]\qquad\text{a.s.},
\end{equation}
where $\dE[(\xi_{n+1})_{(1)}\,|\,\cF_{n}]=\dE[\xi_{n+1}\,|\,\cF_{n}]=p_n$, and $\dE[(\xi_{n+1})_{(k)}\,|\,\cF_{n}]=0$ for all $k \geq 2$. Accordingly, we can deduce from \eqref{DEFPN} and \eqref{CONDMKNP} that for any integer $p \geq 1$,
\begin{align}
\dE[(K_{n+1})_{(p)}\,|\,\cF_{n}]&=(K_{n})_{(p)}+p(K_{n})_{(p-1)}\Big(\frac{\alpha K_n+\theta}{n+\theta}\Big)\qquad\text{a.s.}\notag \\
&=(K_{n})_{(p)}+\frac{\alpha p}{n+\theta}(K_{n})_{(p-1)}(K_n -p+1)+L_n(p)\qquad\text{a.s.} \notag \\
&= \Big(1+\frac{\alpha p}{n+\theta}\Big) (K_{n})_{(p)} +L_n(p)\qquad\text{a.s.}
\label{MKNP1}
\end{align}
where 
$$
L_n(p)=\Big(\frac{p( \alpha(p-1)+\theta)}{n+\theta}\Big)(K_{n})_{(p-1)}.
$$
Denote $b_1(p)=1$ and for all $n \geq 2$,
\begin{equation}
\label{DEFBNP}
b_n(p)=\prod_{k=1}^{n-1}\left(\frac{k+\alpha p+\theta}{k+\theta}\right)
=
\left \{ \begin{array}{ccc}
    {\displaystyle \Big(\frac{\theta}{\alpha p+ \theta}\Big)\frac{(\alpha p + \theta)^{(n)}}{(\theta)^{(n)}}} & \text{ if } & \theta \neq 0, \vspace{1ex}\\[0.4cm]
     {\displaystyle \frac{(\alpha p)^{(n)}}{\alpha p (n-1)!} }& \text{ if }  & \theta=0.
   \end{array}  \right.
\end{equation}
It follows from \eqref{MKNP1} and \eqref{DEFBNP} that for all $n \geq 2$ and for any integer $p \geq 2$,
\begin{displaymath}
\dE[(K_{n})_{(p)}]=b_n(p)\sum_{k=1}^{n-1}\big(b_{k+1}(p)\big)^{-1}\dE[L_k(p)]
\end{displaymath}
leading, for $\theta \neq 0$, to
\begin{equation}\label{MKNP2}
\dE[(K_{n})_{(p)}]\!=\!
\Big(\frac{p( \alpha(p-1)+\theta)}{\alpha p +\theta}\Big)
\Big(\frac{(\alpha p + \theta)^{(n)}}{(\theta)^{(n)}}\Big)
\sum_{k=1}^{n-1}\Big(\frac{(\theta)^{(k)}\dE[(K_{k})_{(p-1)}]}{(\alpha p +\theta+1)^{(k)}}\Big)
\end{equation}
while, for $\theta = 0$, to
\begin{equation}\label{MKNP20}
\dE[(K_{n})_{(p)}]=
\Big(\frac{(p-1) (\alpha p)^{(n)}}{(n-1)!} \Big)
\sum_{k=1}^{n-1}\Big(\frac{(k-1)!\dE[(K_{k})_{(p-1)}]}{(\alpha p +1)^{(k)}}\Big).
\end{equation}
We deduce from \eqref{MOM1KN} and \eqref{MKNP2} that for $\theta \neq 0$,
\begin{displaymath}
\dE[(K_{n})_{(1)}]=\frac{\theta}{\alpha (\theta)^{(n)} }\big((\alpha+ \theta)^{(n)} - (\theta)^{(n)}\big),
\end{displaymath}
\begin{displaymath}
\dE[(K_{n})_{(2)}]=\frac{\theta(\theta + \alpha)}{\alpha^2 (\theta)^{(n)}}\big((2\alpha+ \theta)^{(n)}-2(\alpha+ \theta)^{(n)} + (\theta)^{(n)}\big),
\end{displaymath}
and more generally, for all $n \geq 1$ and for any integer $p \geq 1$,
\begin{align}
\dE[(K_{n})_{(p)}]&=\frac{\prod_{k=0}^{p-1} (k\alpha +\theta)}{\alpha^p (\theta)^{(n)} }
\sum_{k=0}^p (-1)^{p-k} {p\choose k} (k \alpha + \theta)^{(n)}, \notag \\
&=\frac{\big(\frac{\theta}{\alpha} \big)^{(p)}}{(\theta)^{(n)}} \sum_{k=0}^p (-1)^{p-k} {p\choose k} (k \alpha + \theta)^{(n)}.
%, \notag \\
%&=p!\frac{\big(\frac{\theta}{\alpha} \big)^{(p)}}{(\theta)^{(n)}} \frac{1}{p!}\sum_{k=0}^p (-1)^{p-k} {p%\choose k} (k \alpha + \theta)^{(n)}.
%, \notag \\
%&=p!\frac{\big(\frac{\theta}{\alpha} \big)^{(p)}}{(\theta)^{(n)}} \mathscr{C}(n,p, \alpha, \theta).
\label{MKNP3}
\end{align}
In addition, we have from \eqref{MOM1KN0} and \eqref{MKNP20} that
for $\theta = 0$,
\begin{displaymath}
\dE[(K_{n})_{(1)}]=\frac{(\alpha)^{(n)}}{\alpha (n-1)!},
\end{displaymath}
\begin{displaymath}
\dE[(K_{n})_{(2)}]=\frac{1}{\alpha (n-1)!}\big((2\alpha)^{(n)}-2(\alpha)^{(n)} \big),
\end{displaymath}
and more generally, for all $n \geq 1$ and for any integer $p \geq 1$,
\begin{equation}\label{MKNP30}
\dE[(K_{n})_{(p)}]
=\frac{(p-1)!}{\alpha (n-1)!}
\sum_{k=1}^p (-1)^{p-k} {p\choose k} (k \alpha )^{(n)}.
\end{equation}
We obtain from \eqref{DEFBNP} together with \eqref{MKNP3} and \eqref{MKNP30} that it exists a
positive constant $C(p,\alpha, \theta)$ such that for all $n \geq 1$ and $p \geq 1$,
\begin{equation}
\label{MKNP4}
\dE[(K_{n})_{(p)}] \leq C(p,\alpha, \theta) b_n(p).
\end{equation}
Moreover, we also have the elementary inequality
\begin{equation}
\label{MKNP5}
\dE[K_{n}^p] \leq p^p + p! \dE[(K_{n})_{(p)}\rI_{K_n \geq p}].
\end{equation}
Consequently, we find from \eqref{DEFMN}, \eqref{MKNP4} and \eqref{MKNP5}
that it exists a positive constant $D(p,\alpha, \theta)$ such that for all $n \geq 1$ and $p \geq 1$,
\begin{equation}
\label{MKNP6}
\dE[M_n^p] \leq D(p,\alpha, \theta)  b_n(p) b_n^p.
\end{equation}
Furthermore, it is easy to see from \eqref{DEFBN} and \eqref{DEFBNP} that
\begin{equation}
\label{MKNP7}
b_n(p) b_n^p=\prod_{k=1}^{n-1}\left(1+\frac{\alpha p}{k+\theta}\right)
\left(1+\frac{\alpha}{k+\theta}\right)^{\!\!-p} \leq 1.
\end{equation}
Hence, we deduce from \eqref{MKNP6} and \eqref{MKNP7} that
\begin{equation*}
\sup_{n \geq 1}\dE[M_n^p] <+\infty,
\end{equation*}
which means that the martingale $(M_n)$ is bounded in $\mathbb{L}^p$ for any $p \geq 1$. Therefore, it follows from Doob's martingale convergence theorem \citep[Corollary 2.2]{Hal(80)}, the sequence $(M_n)$ converges almost surely and in 
$\mathbb{L}^p$ to a finite random variable $M_{\alpha,\theta}$. Our goal is now to compute all the moments of $M_{\alpha,\theta}$. We shall only carry out the proof for $\theta \neq 0$ inasmuch as the proof for $\theta=0$ follows exactly the same arguments.
First of all, as in \eqref{CVGBNNN}, 
\begin{equation*}
\lim_{n\rightarrow+\infty}\frac{(\alpha p + \theta)^{(n)}}{n^{\alpha p }(\theta)^{(n)}}=
\frac{\Gamma(\theta +1)}{\theta \Gamma(\alpha p +\theta)}.
\end{equation*}
Hence, we obtain once again from \eqref{MKNP3} that for any integer $p \geq 1$,
\begin{equation}
\label{LIMMKN1}
\lim_{n \rightarrow+\infty} \frac{1}{n^{\alpha p}}\dE[(K_{n})_{(p)}]=
\frac{\Gamma(\theta +1)}{\theta \Gamma(\alpha p +\theta)}\Big(\frac{\theta}{\alpha}\Big)^{\!(p)\!}.
\end{equation}
However, we have for all $p\geq 1$,
\begin{equation}
\label{LIMMKN2}
\dE[K_{n}^p]=\sum_{k=0}^p \left\{ 
\begin{matrix}
p \\k 
\end{matrix}
\right\}
\dE[(K_{n})_{(k)}],
\end{equation}
where the curly brackets are the Stirling numbers of the second kind given by
$$
\left\{ 
\begin{matrix}
p \\k 
\end{matrix}
\right\}= \frac{1}{k!}\sum_{i=0}^k (-1)^{k-i}  {k\choose i} i^p.
$$
Since, 
$\Big\{ 
\begin{matrix}
p \\p 
\end{matrix}
\Big\}=1
$, we obtain from \eqref{LIMMKN1} and \eqref{LIMMKN2} that for all $p \geq 1$,
\begin{equation}
\label{CVGMEANKNP}
\lim_{n \rightarrow+\infty} \frac{1}{n^{\alpha p}}\dE[K_{n}^p]=\frac{\Gamma(\theta +1)}{\theta \Gamma(\alpha p +\theta)}\Big(\frac{\theta}{\alpha}\Big)^{\!(p)\!}.
\end{equation}
We also recall from \eqref{DEFBN} that
\begin{equation}
\label{CVGBN}
\lim_{n\rightarrow+\infty} n^{\alpha}b_{n}=\lim_{n\rightarrow+\infty}\frac{n^{\alpha}(\theta+1)_{(n)}}{(\alpha+\theta+1)_{(n)}}=\frac{\Gamma(\alpha+\theta+1)}{\Gamma(\theta+1)}.
\end{equation}
Consequently, it follows from the conjunction of \eqref{DEFMN}, \eqref{CVGMEANKNP} and \eqref{CVGBN} that
\begin{equation*}
\lim_{n\rightarrow+\infty} \dE[M_{n}^{p}]=\lim_{n\rightarrow+\infty} 
b_{n}^{\,p}\dE\Big[\Big(K_{n}+\frac{\theta}{\alpha}\Big)^{p}\Big]
=\lim_{n\rightarrow+\infty} b_{n}^{\,p} \sum_{k=0}^p {p\choose k} \dE[K_{n}^k]
\left( \frac{\theta}{\alpha}\right)^{p-k},
\end{equation*}
which drastically reduces to
\begin{equation}
\label{CVGMEANMNP}
\lim_{n\rightarrow+\infty} \dE[M_{n}^{p}]=
\lim_{n\rightarrow+\infty} b_{n}^{\,p}  \dE[K_{n}^p]
\!=\! \left(\frac{\Gamma(\alpha+\theta+1)}{\Gamma(\theta+1)}\right)^{p}\!\!
\frac{\Gamma(\theta+1)}{\theta \Gamma(\alpha p +\theta)}\!\left(\frac{\theta}{\alpha}\right)^{(p)}
\!\!\!\!\!\!\!.
\end{equation}
It immediately leads, for all $p \geq 1$, to
$$
\dE[M_{\alpha,\theta}^p]=\left(\frac{\Gamma(\alpha+\theta+1)}{\Gamma(\theta+1)}\right)^{p}
\frac{\Gamma(\theta+1)}{\theta \Gamma(\alpha p +\theta)}\left(\frac{\theta}{\alpha}\right)^{(p)},
$$
as well as
$$
\dE[S_{\alpha,\theta}^p]=
\frac{\Gamma(\theta+1)}{\theta \Gamma(\alpha p +\theta)}\left(\frac{\theta}{\alpha}\right)^{(p)}.
$$
The distribution of the random variable $S_{\alpha,\theta}$ has 
probability density function given by \eqref{pitman_div}.
Finally, \eqref{ASCVGLP}
follows from \eqref{ASCVG} and \eqref{CVGMEANKNP} together with Riesz-Scheff\'e theorem,
which completes the proof of Theorem \ref{T-ASCVG}.
\end{proof}

\begin{remark}
An alternative proof of the convergence in $\mathbb{L}^{2}$ given by \eqref{ASCVGLP}, which follows directly from the Poisson process construction of the EP model \citep[Proposition 9]{Pit(03)} can be found in Appendix B. See also \citet[Chapter 4]{Pit(06)}.
\end{remark}

\subsection{The Gaussian fluctuation}

Our martingale approach for $K_{n}$ suggests that, in addition to the almost-sure convergence, it is possible to establish a Gaussian fluctuations of $K_n$, properly normalized, around its almost-sure limit $S_{\alpha,\theta}$.

\begin{thm}
\label{T-FLUCBLOCK}
Let $K_{n}$ be the number of partition sets in a random partition of $[n]$ distributed according to the EP model with $\alpha\in(0,1)$ and $\theta>-\alpha$. Then,
\begin{equation}
\label{ANBLOCK1}
\frac{K_n-n^{\alpha}S_{\alpha,\theta}}{\sqrt{K_n}}
\underset{n\rightarrow+\infty}{\overset{\cL}{\longrightarrow}}
\mathcal{N}(0,1),
\end{equation}
where $\mathcal{N}(0,1)$ denotes a standard Gaussian random variable. Moreover, we also
have
\begin{equation}
\label{ANBLOCK2}
\sqrt{n^\alpha} \left(\frac{K_n}{n^{\alpha}}- S_{\alpha,\theta} \right)
\underset{n\rightarrow+\infty}{\overset{\cL}{\longrightarrow}}
\sqrt{S_{\alpha,\theta}^\prime}\, \mathcal{N}(0,1),
\end{equation}
where $S_{\alpha,\theta}^\prime$ is a random variable independent of $\mathcal{N}(0,1)$ and sharing the same distribution as $S_{\alpha,\theta}$.
\end{thm}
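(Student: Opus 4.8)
The plan is to read $M_{\alpha,\theta}-M_n=\sum_{k\ge n}(M_{k+1}-M_k)$ as the tail of the square-integrable martingale $(M_n)$ and to apply the central limit theorem for martingale tails of \citet{Hey(77)}, which is precisely the fluctuation-level complement of the martingale convergence theorem used in Theorem \ref{T-ASCVG}. Set
$$U_n^2:=\langle M\rangle_\infty-\langle M\rangle_n=\sum_{k\ge n}b_{k+1}^2\,\frac{(\theta+\alpha K_k)(k-\alpha K_k)}{(k+\theta)^2},$$
which is a.s.\ finite by \eqref{SUPPQVMN} and a.s.\ positive since $0<p_n<1$. The first, and main, step is to identify the a.s.\ asymptotics of $U_n^2$. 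Writing the general term as $k^{-\alpha-1}c_k$ with $c_k:=\big(k^{2\alpha}b_{k+1}^2\big)\big(k^{1-\alpha}p_k(1-p_k)\big)$, the factor $k^{2\alpha}b_{k+1}^2$ tends to $(\Gamma(\alpha+\theta+1)/\Gamma(\theta+1))^2$ by \eqref{CVGBN}, while $k^{1-\alpha}p_k(1-p_k)\to\alpha S_{\alpha,\theta}$ a.s., because $K_k/k^\alpha\to S_{\alpha,\theta}$ by \eqref{ASCVG} and $\alpha<1$; hence $c_k\to\alpha(\Gamma(\alpha+\theta+1)/\Gamma(\theta+1))^2S_{\alpha,\theta}$ a.s. A Toeplitz/Kronecker argument applied to $U_n^2=\sum_{k\ge n}k^{-\alpha-1}c_k$, together with $n^\alpha\sum_{k\ge n}k^{-\alpha-1}\to1/\alpha$, then gives $n^\alpha U_n^2\to(\Gamma(\alpha+\theta+1)/\Gamma(\theta+1))^2S_{\alpha,\theta}$ a.s.; comparing with \eqref{CVGBN} and \eqref{ASCVG}, this is equivalent to $U_n^2/(b_n^2K_n)\to1$ a.s.

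Next I check the remaining hypothesis of \citet{Hey(77)}, a conditional Lindeberg condition. Since $|M_{k+1}-M_k|=b_{k+1}|\xi_{k+1}-p_k|\le b_{k+1}$ with $b_{k+1}$ deterministic and $b_{k+1}\sim(\Gamma(\alpha+\theta+1)/\Gamma(\theta+1))k^{-\alpha}$, whereas $U_n\sim(\Gamma(\alpha+\theta+1)/\Gamma(\theta+1))\sqrt{S_{\alpha,\theta}}\,n^{-\alpha/2}$ by the first step, one has $U_n^{-1}\max_{k\ge n}|M_{k+1}-M_k|\to0$ a.s.; consequently, for every $\varepsilon>0$, the sum $U_n^{-2}\sum_{k\ge n}\dE\big[(M_{k+1}-M_k)^2\rI_{\{|M_{k+1}-M_k|>\varepsilon U_n\}}\,\big|\,\cF_k\big]$ vanishes for all $n$ large, a.s. The theorem of \citet{Hey(77)} then applies and yields the stable (mixing) convergence $(M_{\alpha,\theta}-M_n)/U_n\overset{\cL}{\longrightarrow}\mathcal{N}(0,1)$.

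It remains to transfer this to $K_n$. By \eqref{DEFMN} and the identity $M_{\alpha,\theta}=(\Gamma(\alpha+\theta+1)/\Gamma(\theta+1))S_{\alpha,\theta}$,
$$M_{\alpha,\theta}-M_n=b_n\big(n^\alpha S_{\alpha,\theta}-K_n\big)+\Big(\frac{\Gamma(\alpha+\theta+1)}{\Gamma(\theta+1)}-n^\alpha b_n\Big)S_{\alpha,\theta}-\frac{\theta}{\alpha}b_n.$$
Refining \eqref{CVGBN} through the standard Euler Gamma asymptotics to $n^\alpha b_n=(\Gamma(\alpha+\theta+1)/\Gamma(\theta+1))(1+O(1/n))$, the last two terms are $O(n^{-1})+O(n^{-\alpha})=o(U_n)$; dividing by $U_n$ and using $U_n=b_n\sqrt{K_n}(1+o(1))$ a.s.\ from the first step gives $(M_{\alpha,\theta}-M_n)/U_n=-\big(1+o_{\mathrm{a.s.}}(1)\big)\big(K_n-n^\alpha S_{\alpha,\theta}\big)/\sqrt{K_n}+o_{\mathrm{a.s.}}(1)$. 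Slutsky's lemma and the symmetry of the Gaussian law then give \eqref{ANBLOCK1}. For \eqref{ANBLOCK2}, write $\sqrt{n^\alpha}\big(K_n/n^\alpha-S_{\alpha,\theta}\big)=\big((K_n-n^\alpha S_{\alpha,\theta})/\sqrt{K_n}\big)\sqrt{K_n/n^\alpha}$: the first factor converges $\cF_\infty$-stably to a standard Gaussian $\mathcal{N}$ independent of $\cF_\infty$, the second converges a.s.\ to the $\cF_\infty$-measurable $\sqrt{S_{\alpha,\theta}}$, so the product converges in law to $\sqrt{S_{\alpha,\theta}}\,\mathcal{N}$, which is exactly the scale mixture $\sqrt{S_{\alpha,\theta}^\prime}\,\mathcal{N}(0,1)$.

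The main obstacle is the first step: one must push the a.s.\ limit $K_n/n^\alpha\to S_{\alpha,\theta}$ through the infinite tail sum of conditional variances to obtain $n^\alpha U_n^2\to(\Gamma(\alpha+\theta+1)/\Gamma(\theta+1))^2S_{\alpha,\theta}$, equivalently $U_n^2\sim b_n^2K_n$, which requires handling the nonlinear dependence of $p_k(1-p_k)$ on $K_k$ and a Toeplitz-type lemma. A secondary but essential point is to invoke \citet{Hey(77)} in the form giving mixing convergence rather than mere convergence in distribution, since it is the mixing property that legitimizes both the self-normalization by $\sqrt{K_n}$ in \eqref{ANBLOCK1} and the scale-mixture limit in \eqref{ANBLOCK2}.
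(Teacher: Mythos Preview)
Your approach is essentially the paper's: compute the tail conditional variance $\Lambda_n=U_n^2$, show $n^\alpha\Lambda_n\to(\Gamma(\alpha+\theta+1)/\Gamma(\theta+1))^2 S_{\alpha,\theta}$ a.s., check Lindeberg, apply \cite{Hey(77)}, and transfer back to $K_n$ via the sharp expansion \eqref{SHARPBN}. Your Lindeberg argument, based on the deterministic bound $|\Delta M_{k+1}|\le b_{k+1}=O(k^{-\alpha})$ against a normalizer of order $n^{-\alpha/2}$, is a genuine simplification over the paper's route, which computes $\dE[\Delta M_{n+1}^4\,|\,\cF_n]$ explicitly (your observation makes the indicator identically zero for all large $n$, with either the random threshold $\varepsilon U_n$ or the deterministic $\eta\,n^{-\alpha/2}$).

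There is one omission. In \cite{Hey(77)} the stable (mixing) form of the conclusion---which you correctly rely on for both the self-normalization in \eqref{ANBLOCK1} and the scale-mixture limit in \eqref{ANBLOCK2}---requires, in addition to Lindeberg and $U_n^2/s_n^2\to\eta^2$, that the auxiliary martingale
\[
P_{n+1}=\sum_{k=1}^n k^\alpha\big(\Delta M_{k+1}^2-\dE[\Delta M_{k+1}^2\,|\,\cF_k]\big)
\]
converge a.s. The paper verifies this via the fourth-moment computation; your bounded-increment bound $\Delta M_{k+1}^2\le b_{k+1}^2$ alone is not enough when $\alpha\le 1/2$, since it only gives $\langle P\rangle_\infty\le C\sum k^{-2\alpha}$. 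The fix is one line: for Bernoulli $\xi_{k+1}$ one has $\dE[(\xi_{k+1}-p_k)^4\,|\,\cF_k]\le p_k(1-p_k)$, hence $\dE[\Delta M_{k+1}^4\,|\,\cF_k]\le b_{k+1}^4\,p_k=O(k^{-1-3\alpha})$ a.s., and $\langle P\rangle_\infty\le C\sum k^{2\alpha}\dE[\Delta M_{k+1}^4\,|\,\cF_k]\le C'\sum k^{-1-\alpha}<\infty$. With this added, your argument matches the paper's and is somewhat cleaner.
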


\begin{proof}
The proof relies on a beautiful result due to \cite{Hey(77)} concerning with Gaussian fluctuations for martingales.
For all $n\geq 1$, denote $\Delta M_{n+1}=M_{n+1}-M_{n}$ and
$$
\Lambda_{n}=\sum_{k=n}^{\infty}\dE\big[\Delta M_{k+1}^2|\cF_{k}\big].
$$
We immediately have from \eqref{COND2MN} that 
\begin{equation}
\label{FLUC1}
\Lambda_{n}
=\sum_{k=n}^{\infty}b_{k+1}^2 \left(\frac{(\theta + \alpha K_{k})(k-\alpha K_k)}{(k+\theta)^2}\right).
\end{equation}
On the one hand, we know from \eqref{ASCVG} that
\begin{equation}
\label{FLUC2}
\lim_{n \rightarrow+\infty} \frac{K_n}{n^\alpha}=S_{\alpha,\theta}\qquad\text{a.s.}
\end{equation}
On the other hand, we already saw from \eqref{CVGBN} that
\begin{equation}
\label{FLUC3}
\lim_{n \rightarrow+\infty} n^{\alpha}b_{n}=\frac{\Gamma(\alpha+\theta+1)}{\Gamma(\theta+1)}.
\end{equation}
Consequently, we deduce from \eqref{FLUC1}, \eqref{FLUC2} and \eqref{FLUC3} that
\begin{equation}\label{FLUC4}
\lim_{n \rightarrow+\infty} n^{\alpha}\Lambda_n= \Big( \frac{\Gamma(\alpha+\theta+1)}{\Gamma(\theta+1)}\Big)^2 S_{\alpha,\theta}\qquad\text{a.s.}
\end{equation}
By the same token, if
$$
s_{n}^2=\dE[\Lambda_{n}]=\sum_{k=n}^{\infty}\dE\big[\Delta M_{k+1}^2\big],
$$
we also have
\begin{equation*}
s_{n}^2
=\sum_{k=n}^{\infty}b_{k+1}^2 \left(\frac{\theta k + \alpha(k-\theta) \dE[K_{k}] -\alpha^2 \dE[K_k^2]}{(k+\theta)^2}\right).
\end{equation*}
However, we already saw from \eqref{CVGMEANKNP} that
\begin{equation}
\lim_{n\rightarrow+\infty}\frac{1}{n^\alpha}\dE[K_{n}]= 
\frac{\Gamma(\theta+1)}{\alpha \Gamma(\alpha+\theta)}
\label{LIMKN1}
\end{equation}
and
\begin{equation}
\label{LIMKN2}
\lim_{n\rightarrow+\infty}\frac{1}{n^{2\alpha}}\dE[K_{n}^2]=
\frac{(\alpha+\theta) \Gamma(\theta+1)}{\alpha^2 \Gamma(2\alpha+\theta)}.
\end{equation}
It ensures that
\begin{displaymath}
\lim_{n \rightarrow+\infty} n^{\alpha}s_n^2= \Big( \frac{ \alpha+\theta }{\alpha}\Big)
\frac{\Gamma(\alpha+\theta+1)}{\Gamma(\theta+1)}.
\end{displaymath} 
Hereafter, we claim that for any $\eta >0$,
\begin{equation}\label{FLUC6}
\lim_{n \rightarrow+\infty} n^{\alpha}\sum_{k=n}^{\infty}\dE\big[\Delta M_{k+1}^2 \rI_{\{|\Delta M_{k+1}|>\eta \sqrt{n^{-\alpha}} \}}\big]=0.
\end{equation}
As a matter of fact, we clearly have  for any $\eta >0$,
$$
n^{\alpha}\sum_{k=n}^{\infty}\dE\big[\Delta M_{k+1}^2 \rI_{\{|\Delta M_{k+1}|>\eta \sqrt{n^{-\alpha}} \}}\big]
\leq 
\frac{n^{2\alpha}}{\eta^2}\sum_{k=n}^{\infty}\dE\big[\Delta M_{k+1}^4\big].
$$
Moreover, it follows from tedious but straightforward calculations that for all $n \geq 1$,
\begin{equation}
\label{DECMN4COND}
\dE\big[\Delta M_{n+1}^4 | \cF_n\big]=b_{n+1}^4 \sum_{p=0}^4 e_n(p)K_n^p
\end{equation}
where
\begin{align*}
e_n(0)&=\frac{n \theta}{(n+\theta)^4}(n^2-n\theta +\theta^2), \\
e_n(1)&=\frac{\alpha( n-\theta) }{(n+\theta)^4}(n^2-4n\theta +\theta^2), \\
e_n(2)&=\frac{-2\alpha^2( 2n-\theta)(n-2\theta)}{(n+\theta)^4}, \\
e_n(3)&= \frac{6\alpha^3(n-\theta)}{(n+\theta)^4}, \\
e_n(4)&=- \frac{3\alpha^4}{n^4}.
\end{align*}
Taking the expectation on both sides of \eqref{DECMN4COND}, we obtain that for all $n \geq 1$,
\begin{equation}
\label{DECMN4}
\dE\big[\Delta M_{n+1}^4 \big]=b_{n+1}^4 \sum_{p=0}^4 e_n(p)\dE[K_n^p]
\end{equation}
Furthermore, we already saw from \eqref{CVGMEANKNP} that
\begin{equation}
\label{LIMKN3}
\lim_{n\rightarrow+\infty}\frac{1}{n^{3\alpha}}\dE[K_{n}^3]=
\frac{(\alpha+\theta)(2\alpha+\theta) \Gamma(\theta+1)}{\alpha^3 \Gamma(3\alpha+\theta)}
\end{equation}
and
\begin{equation}
\label{LIMKN4}
\lim_{n\rightarrow+\infty}\frac{1}{n^{4\alpha}}\dE[K_{n}^4]=
\frac{(\alpha+\theta)(2\alpha+\theta)(3\alpha+\theta) 
\Gamma(\theta+1)}{\alpha^4 \Gamma(4\alpha+\theta)}.
\end{equation}
Hence, we deduce from \eqref{FLUC3} and \eqref{DECMN4} together with \eqref{LIMKN1}, \eqref{LIMKN2}, \eqref{LIMKN3} and \eqref{LIMKN4} that
\begin{equation}
\label{FLUC7}
\lim_{n \rightarrow+\infty} n^{1+3\alpha}\dE\big[\Delta M_{n+1}^4 \big]= (\alpha + \theta)
\Big( \frac{\Gamma(\alpha+\theta+1)}{\Gamma(\theta+1)}\Big)^3.
\end{equation} 
Consequently, we obtain from \eqref{FLUC7} that
$$
\lim_{n \rightarrow+\infty} n^{3\alpha} \sum_{k=n}^{\infty}\dE\big[\Delta M_{k+1}^4\big]
= (\alpha + \theta)
\Big( \frac{\Gamma(\alpha+\theta+1)}{\Gamma(\theta+1)}\Big)^3
$$
which clearly leads to \eqref{FLUC6}. Finally, let $(P_n)$ be the martingale defined by
\begin{equation}
\label{DEFMARTPN}
P_{n+1}=\sum_{k=1}^n k^{\alpha} \big(\Delta M_{k+1}^2 - \dE[\Delta M_{k+1}^2\,|\, \cF_{k}]\big).
\end{equation}
Its predictable quadratic variation is given by
$$
\langle P \rangle_{n+1} = \sum_{k=1}^n k^{2\alpha} 
\big(\dE[\Delta M_{k+1}^4\,|\, \cF_{k}]- (\dE[\Delta M_{k+1}^2\,|\,\cF_{k}])^2\big).
$$
Therefore, we find from \eqref{DECMN4COND} that
\vspace{-1ex}
\begin{equation*}
\langle P \rangle_{n+1} \leq \sum_{p=0}^4\sum_{k=1}^n k^{2 \alpha}b_{k+1}^4e_{k}(p)K_{k}^p.
\end{equation*}
One can observe that we always have $e_n(4) < 0$. For $n$ large enough, $e_n(2) <0$.
Furthermore, it exists some constant $C>0$ such that for $n$ large enough, $e_n(0) \leq C/n$,
$e_n(1) \leq C/n$ and $e_n(3) \leq C/n^3$. Consequently, we get that for $n$ large enough,
\begin{equation}
\label{CVGMARTCN}
\langle P \rangle_{n} \leq C \left( \sum_{k=1}^n \frac{k^{2 \alpha}b_{k}^4}{k}
+\sum_{k=1}^n \frac{k^{2 \alpha}b_{k}^4K_k}{k} + \sum_{k=1}^n \frac{k^{2 \alpha}b_{k}^4K_k^3}{k^3}
\right).
\end{equation}
Hence, we obtain from \eqref{FLUC2}, \eqref{FLUC3} and \eqref{CVGMARTCN} that $\langle P \rangle_n$ converges 
a.s. to a finite random variable. Then, we deduce from the strong law of large numbers 
for martingales given by the first part of  \cite[Theorem 1.3.15]{Duf(97)} that $(P_n)$ 
converges a.s. to a finite random variable.
All the conditions of the second part of Theorem 1 and Corollaries 1 and 2 in \cite{Hey(77)} are satisfied, which leads to
\begin{equation}
\label{FLUC8}
\frac{M_n-M_{\alpha,\theta}}{\sqrt{\Lambda_n}} 
\underset{n\rightarrow+\infty}{\overset{\cL}{\longrightarrow}}
\mathcal{N}(0, 1)
\end{equation}
as well as to  
\begin{equation}
\label{FLUC9}
\sqrt{n^{\alpha}}(M_n-M_{\alpha,\theta}) 
\underset{n\rightarrow+\infty}{\overset{\cL}{\longrightarrow}} 
\frac{\Gamma(\alpha+\theta+1)}{\Gamma(\theta+1)}\sqrt{S_{\alpha,\theta}^\prime}
\mathcal{N}(0, 1),
\end{equation}
where $M_{\alpha,\theta}$ is the almost-sure limit of the martingale $(M_n)$ and
$S_{\alpha,\theta}^\prime$ is independent of the Gaussian $\mathcal{N}(0, 1)$ random variable
and $S_{\alpha,\theta}^\prime$ shares the same distribution as $S_{\alpha,\theta}$. 
However, the random variables $M_{\alpha,\theta}$ and $S_{\alpha,\theta}$ are tightly related through
$$M_{\alpha,\theta}=\Big( \frac{\Gamma(\alpha+\theta+1)}{\Gamma(\theta+1)}\Big)S_{\alpha,\theta}.$$
In addition, we obtain from the asymptotic behavior of the ratio of two Gamma functions that
\begin{equation}
\label{SHARPBN}
b_n= \frac{\Gamma(\alpha+\theta+1)}{\Gamma(\theta+1) n^\alpha}
\left(1-\frac{\alpha(2\theta+\alpha-1)}{2n}+O\Big(\frac{1}{n^2}\Big) \right).
\end{equation}
Finally, the Gaussian fluctuations \eqref{ANBLOCK1} and \eqref{ANBLOCK2}
follow from \eqref{FLUC8} and \eqref{FLUC9} together with the almost-sure convergences
\eqref{ASCVG}, \eqref{FLUC4}, \eqref{SHARPBN} and Slutsky's lemma, which achieves the proof of Theorem \ref{T-FLUCBLOCK}. 
\end{proof}

\subsection{The law of the iterated logarithm}

We conclude our asymptotic analysis of $K_{n}$ by establishing the law of iterated logarithm.

\begin{thm}
\label{T-LILBLOCK}
Let $K_{n}$ be the number of partition sets in a random partition of $[n]$ distributed according to the EP model with $\alpha\in(0,1)$ and $\theta>-\alpha$. Then,
\begin{equation}
\label{LILBLOCK1}
\limsup_{n \rightarrow \infty} \left(\frac{K_n-n^{\alpha}S_{\alpha,\theta}}{\sqrt{2K_n \log \log n}}\right)
= -\liminf_{n \rightarrow \infty} \left(\frac{K_n-n^{\alpha}S_{\alpha,\theta}}{\sqrt{2K_n \log \log n}}
\right)
= 1\qquad\text{a.s.}
\end{equation}
Moreover, we also have
\begin{equation*}
\label{LILBLOCK2}
\limsup_{n \rightarrow \infty} \left(\frac{K_n-n^{\alpha}S_{\alpha,\theta}}{\sqrt{2n^\alpha \log \log n}} \right)
= -\liminf_{n \rightarrow \infty} \left(\frac{K_n-n^{\alpha}S_{\alpha,\theta}}{\sqrt{2 n^\alpha \log \log n}} \right)
= \sqrt{S_{\alpha,\theta}}\qquad\text{a.s.}
\end{equation*}
In particular,
\begin{equation*}
\limsup_{n \rightarrow \infty} \frac{\big(K_n-n^{\alpha}S_{\alpha,\theta}\big)^2}{2n^\alpha \log \log n} 
=  S_{\alpha,\theta}\qquad\text{a.s.}
\end{equation*}
\end{thm}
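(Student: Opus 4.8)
The plan is to deduce the iterated logarithm behaviour of $K_n$ from the martingale $(M_n)$ of \eqref{DEFMN} exactly as Theorem \ref{T-FLUCBLOCK} deduced the Gaussian fluctuations, now invoking the iterated logarithm half of \cite{Hey(77)} rather than its central limit half. Writing $C=\Gamma(\alpha+\theta+1)/\Gamma(\theta+1)$, recall that in the proof of Theorem \ref{T-FLUCBLOCK} we already verified every hypothesis of \cite{Hey(77)}: the martingale $(M_n)$ converges a.s.\ to $M_{\alpha,\theta}=C\,S_{\alpha,\theta}$, the tail conditional variances $\Lambda_n=\sum_{k\geq n}\dE[\Delta M_{k+1}^2\,|\,\cF_k]$ obey the conditional Lindeberg condition \eqref{FLUC6} and, by \eqref{FLUC4}, the regularity $n^\alpha\Lambda_n\to C^2 S_{\alpha,\theta}$ a.s.\ (so in particular $\Lambda_{n+1}/\Lambda_n\to1$ a.s.), and the auxiliary martingale $(P_n)$ of \eqref{DEFMARTPN} converges a.s.\ by \eqref{CVGMARTCN}. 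Under precisely these conditions the iterated logarithm part of Theorem 1 and of Corollaries 1 and 2 in \cite{Hey(77)} applies and yields
\begin{equation}
\label{LILMART}
\limsup_{n\to\infty}\frac{M_n-M_{\alpha,\theta}}{\sqrt{2\Lambda_n\log\log(1/\Lambda_n)}}
=-\liminf_{n\to\infty}\frac{M_n-M_{\alpha,\theta}}{\sqrt{2\Lambda_n\log\log(1/\Lambda_n)}}=1\qquad\text{a.s.}
\end{equation}

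What remains is bookkeeping to rewrite \eqref{LILMART} in terms of $K_n$, $n^\alpha$ and $\log\log n$. First I would note that, since $S_{\alpha,\theta}>0$ a.s., \eqref{FLUC4} gives $\Lambda_n=C^2 S_{\alpha,\theta}\,n^{-\alpha}(1+o(1))$, whence $\log(1/\Lambda_n)=\alpha\log n+O(1)$ and $\log\log(1/\Lambda_n)=\log\log n\,(1+o(1))$ a.s., so that $\sqrt{2\Lambda_n\log\log(1/\Lambda_n)}=C\sqrt{2 S_{\alpha,\theta}\,n^{-\alpha}\log\log n}\,(1+o(1))$ a.s. On the numerator side, using $M_n=b_n(K_n+\theta/\alpha)$ and $M_{\alpha,\theta}=C\,S_{\alpha,\theta}$, I would decompose
\[
M_n-M_{\alpha,\theta}=b_n\big(K_n-n^\alpha S_{\alpha,\theta}\big)+\big(n^\alpha b_n-C\big)S_{\alpha,\theta}+\frac{\theta}{\alpha}\,b_n,
\]
observing from \eqref{SHARPBN} that the last two terms are $O(1/n)+O(n^{-\alpha})=o\big(n^{-\alpha/2}\sqrt{\log\log n}\big)$ a.s., while $n^\alpha b_n\to C$. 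Dividing the decomposition by $\sqrt{2\Lambda_n\log\log(1/\Lambda_n)}$ and inserting the preceding asymptotics gives
\[
\frac{M_n-M_{\alpha,\theta}}{\sqrt{2\Lambda_n\log\log(1/\Lambda_n)}}
=\frac{K_n-n^\alpha S_{\alpha,\theta}}{\sqrt{2\,n^\alpha S_{\alpha,\theta}\log\log n}}\,(1+o(1))+o(1)\qquad\text{a.s.},
\]
so by \eqref{LILMART} the limit superior of $(K_n-n^\alpha S_{\alpha,\theta})/\sqrt{2n^\alpha S_{\alpha,\theta}\log\log n}$ equals $1$ and its limit inferior equals $-1$ a.s. Pulling the a.s.\ positive factor $\sqrt{S_{\alpha,\theta}}$ out of the square root turns this into the second displayed identity of Theorem \ref{T-LILBLOCK}. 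For \eqref{LILBLOCK1} one replaces $n^\alpha S_{\alpha,\theta}$ by $K_n$ inside the square root, legitimate because $K_n/n^\alpha\to S_{\alpha,\theta}$ a.s.\ by Theorem \ref{T-ASCVG}, so that $\sqrt{2K_n\log\log n}\sim\sqrt{2n^\alpha S_{\alpha,\theta}\log\log n}$ a.s.; and the final ``in particular'' assertion follows at once from the second identity by squaring, since $\limsup a_n=-\liminf a_n=\sqrt{S_{\alpha,\theta}}$ forces $\limsup a_n^2=S_{\alpha,\theta}$.

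The only genuinely non-mechanical point is the first step: one must make sure that the statement borrowed from \cite{Hey(77)} is the iterated logarithm conclusion normalised by the \emph{random} tail conditional variance $\Lambda_n$ (normalising instead by its expectation $s_n^2$ would reproduce the mixed normal scaling of \eqref{FLUC9} and would not give a deterministic limit), and that the hypotheses it requires coincide with those already established on the way to Theorem \ref{T-FLUCBLOCK}; in particular it is worth making explicit that the a.s.\ convergence of $\langle P\rangle_n$ in \eqref{CVGMARTCN} supplies exactly the extra summability condition Heyde requires for the law of the iterated logarithm, beyond what the central limit theorem uses. Everything after that, namely the $\log\log$ asymptotics and the negligibility of the $O(1/n)$ and $O(n^{-\alpha})$ remainders against the scale $n^{-\alpha/2}\sqrt{\log\log n}$, is routine.
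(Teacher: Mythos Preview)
Your approach is the same as the paper's: invoke the iterated--logarithm half of \cite{Hey(77)} for the martingale $(M_n)$, obtain \eqref{LILMART} (the paper writes it with $\log\log n$ in place of $\log\log(1/\Lambda_n)$, which is equivalent by \eqref{FLUC4}), and then translate back to $K_n$ via \eqref{FLUC4}, \eqref{SHARPBN} and \eqref{ASCVG}. Your bookkeeping for that translation is in fact more detailed than the paper's one-line appeal to \eqref{ASCVG} and \eqref{FLUC4}.

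There is, however, a genuine gap in the first step. You assert that ``every hypothesis of \cite{Hey(77)}'' was already verified in the proof of Theorem \ref{T-FLUCBLOCK}, and later that the a.s.\ convergence of $(P_n)$ is ``exactly the extra summability condition'' needed for the LIL beyond the CLT. This is not the case: Heyde's iterated--logarithm statement requires, in addition to the Lindeberg condition \eqref{FLUC6} and the convergence of $(P_n)$, two truncated--moment series conditions of the form
\[
\sum_{n\geq 1} n^{\alpha/2}\,\dE\big[|\Delta M_n|\,\rI_{\{|\Delta M_n|>\eta\, n^{-\alpha/2}\}}\big]<\infty,
\qquad
\sum_{n\geq 1} n^{2\alpha}\,\dE\big[\Delta M_n^4\,\rI_{\{|\Delta M_n|\leq \delta\, n^{-\alpha/2}\}}\big]<\infty,
\]
which are \emph{not} implied by \eqref{FLUC6}. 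The paper checks both of these explicitly, using the fourth--moment asymptotic \eqref{FLUC7} to bound each series by a constant multiple of $\sum n^{-1-\alpha}<\infty$. The verification is short and entirely routine given \eqref{FLUC7}, but it cannot be omitted; as written, your proof does not establish the hypotheses it invokes.
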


\begin{proof}
The proof is a direct application of \cite{Hey(77)}. Using the same notation as in the proof of Theorem \ref{T-FLUCBLOCK}, we have for any $\eta >0$,
\begin{equation}
\label{LIL1}
\sum_{n=1}^{\infty}n^{\alpha/2} \dE\big[|\Delta M_{n}| \rI_{\{|\Delta M_{n}|>\eta \sqrt{n^{-\alpha}} \}}\big]\leq \frac{1}{\eta^3}\sum_{n=1}^{\infty} n^{2\alpha}\dE\big[\Delta M_{n}^4\big].
\end{equation}
Hence, we deduce from \eqref{FLUC7} and \eqref{LIL1} that it exists
some constant $C>0$ such that
\begin{equation*}
\sum_{n=1}^{\infty}n^{\alpha/2} 
\dE\big[|\Delta M_{n}| \rI_{\{|\Delta M_{n}|>\eta \sqrt{n^{-\alpha}} \}}\big]\leq C\sum_{n=1}^{\infty} \frac{1}{n^{1+\alpha}}< \infty.
\end{equation*}
Moreover, we also have for any $\delta >0$,
\begin{equation*}
\sum_{n=1}^{\infty}n^{2\alpha} 
\dE\big[\Delta M_{n}^4 \rI_{\{|\Delta M_{n}|\leq \delta \sqrt{n^{-\alpha}} \}}\big]\leq \sum_{n=1}^{\infty} n^{2\alpha} \dE\big[\Delta M_{n}^4\big]< \infty.
\end{equation*}
Furthermore, we already saw that the martingale $(P_n)$, defined by \eqref{DEFMARTPN}, 
converges a.s. to a finite random variable. Consequently, all the conditions of the second part of Theorem 1 and Corollary 2 in \cite{Hey(77)} are satisfied, which ensures that
\begin{equation}
\label{LIL2}
\limsup_{n \rightarrow \infty} \left(\frac{M_n-M_{\alpha,\theta}}{\sqrt{2\Lambda_n \log \log n}} \right)
= -\liminf_{n \rightarrow \infty} \left(\frac{M_n-M_{\alpha,\theta}}{\sqrt{2\Lambda_n \log \log n}} \right)
= 1\qquad\text{a.s.}
\end{equation}
where $M_{\alpha,\theta}$ is the almost-sure limit of the martingale $(M_n)$.
Finally, the law of iterated logarithm \eqref{LILBLOCK1}
follows from \eqref{LIL2} together with the almost-sure convergences \eqref{ASCVG} and \eqref{FLUC4}, which completes the proof of Theorem \ref{T-LILBLOCK}. 
\end{proof}

%%%%%%%%%%%%%%%%%%%%%%%%%%%%%%%%
%%%%%%%%%%%%%%%%%%%%%%%%%%%%%%%%
%%%%%%%%%%%%%%%%%%%%%%%%%%%%%%%%
%%%%%%%%%%%%%%%%%%%%%%%%%%%%%%%%

%\section{Results for $K_{r,n}$}\label{sec2}
\section{Asymptotic results for the partition subsets of size $r$}\label{sec2}
We start by introducing the keystone martingale construction for $K_{r,n}$.  In particular, our approach is different from that developed in Section \ref{sec1}, since we are going to build a  martingale that will not converge almost surely to a finite random variable.  First, we consider the case $r=1$. From the sequential construction of the  EP model \citep[Proposition 9]{Pit(95)}, we have for all $n \geq 1$,
\begin{equation}\label{DEFM1}
K_{1,n+1}=K_{1,n}+\xi_{1,n+1}
\end{equation}
where the conditional distribution of the random variable $\xi_{1,n+1}$, given the $\sigma$-algebra $\mathcal{F}_{n}=\sigma(K_{1},\ldots,K_{n},K_{1,1},\ldots,K_{1,n})$, is such that
\begin{equation}
\label{DEFXIM1}
   \dP(\xi_{1,n+1}=k\,|\,\mathcal{F}_{n})=\left \{ \begin{array}{ccc}
    {p_{1,n}}  & \text{ if } & k=1, \vspace{1ex}\\[0.4cm]
    {q_{1,n}}  & \text{ if } & k=-1, \vspace{1ex}\\[0.4cm]
     {1-p_{1,n}-q_{1,n}} & \text{ if }  & k=0,
   \end{array}  \right.
\end{equation}
where
\begin{equation}\label{DEFIP1N}
p_{1,n}=\frac{\alpha K_{n}+\theta}{n+\theta}\qquad \text{and}\qquad q_{1,n}=\frac{(1-\alpha)K_{1,n}}{n+\theta}.
\end{equation}
We refer the reader to Appendix A for more details on \eqref{DEFXIM1}. From definition of the sequence $(\xi_{1,n})$, we clearly have $\E[\xi_{1,n+1}\,|\,\mathcal{F}_{n}]=p_{1,n}-q_{1,n}$. Consequently, we obtain from \eqref{DEFM1} that
\begin{equation*}
\E[K_{1,n+1}\,|\,\mathcal{F}_{n}]=\E[K_{1,n}+\xi_{1,n+1}\,|\,\mathcal{F}_{n}]=K_{1,n}+p_{1,n}-q_{1,n}\qquad\text{a.s},
\end{equation*}
which leads to
\begin{equation}
\label{CONDM1M1}
\E[K_{1,n+1}\,|\,\mathcal{F}_{n}]=\beta_{1,n}K_{1,n}+p_{1,n}\qquad \text{a.s.},
\end{equation}
where
\begin{equation}\label{DEFALFAN1}
\beta_{1,n}=1-\frac{1-\alpha}{n+\theta}=\frac{n-1+\theta+\alpha}{n+\theta}.
\end{equation}
Let $(b_{1,n})$ be the sequence defined by $b_{1,1}=1$ and for all $n\geq2$,
\begin{equation}
\label{DEFBN1}
b_{1,n}=\prod_{k=1}^{n-1}\beta_{1,k}^{-1}=\prod_{k=1}^{n-1}\left(\frac{k+\theta}{k-1+\theta+\alpha}\right)=\frac{(\theta+1)^{(n-1)}}{(\alpha+\theta)^{(n-1)}}.
\end{equation}
Now, we introduce the sequence of random variables $(M_{1,n})$ that is defined, for all $n\geq1$, by
\begin{equation}\label{DEFMN_M}
M_{1,n}=b_{1,n}K_{1,n}-A_{1,n},
\end{equation}
where
\begin{equation}\label{DEFMN_A}
A_{1,n}=\sum_{k=1}^{n-1}b_{1,k+1}\left(\frac{\alpha K_{k}+\theta}{k+\theta}\right).
\end{equation}
One can observe that $A_{1,n+1}=A_{1,n}+p_{1,n}b_{1,n+1}$ and $b_{1,n}=\beta_{1,n}b_{1,n+1}$.
Hence, we have from \eqref{CONDM1M1} and \eqref{DEFALFAN1} that for all $n\geq1$,
\begin{align*}
\E[M_{1,n+1}\,|\,\mathcal{F}_{n}]&=b_{1,n+1}\E[K_{1,n+1}\,|\,\mathcal{F}_{n}]-A_{1,n+1}\\
&=b_{1,n+1}(\beta_{1,n}K_{1,n}+p_{1,n})-A_{1,n}-p_{1,n}b_{1,n+1}\\
&=\beta_{1,n}b_{1,n+1}K_{1,n}-A_{1,n}\\
&=b_{1,n}K_{1,n}-A_{1,n}\\
&=M_{1,n}
\end{align*}
almost surely. Moreover, $(M_{1,n})$ is square integrable as $K_{1,n}\leq K_{n}\leq n$. Consequently, $(M_{1,n})$ is a locally square integrable martingale. Because of \eqref{DEFXIM1}, it is not hard to compute all the higher order conditional moments of $(M_{1,n})$. For example, as
\begin{displaymath}
\Delta M_{1,n+1}=M_{1,n+1}-M_{1,n}=b_{1,n+1}(K_{1,n+1}-\E[K_{1,n+1}\,|\,\mathcal{F}_{n}])
\end{displaymath}
we clearly have from \eqref{DEFM1} and \eqref{CONDM1M1} that
\begin{displaymath}
\E[(\Delta M_{1,n+1})^{2}\,|\,\mathcal{F}_{n}]=b_{1,n+1}^{2}(\E[K_{1,n+1}^{2}\,|\,\mathcal{F}_{n}]-(\E[K_{1,n+1}\,|\,\mathcal{F}_{n}])^{2})
\end{displaymath}
with 
\begin{equation*}
\E[K_{1,n+1}^{2}\,|\,\mathcal{F}_{n}]=K_{1,n}^{2}+2K_{1,n}(p_{1,n}-q_{1,n})+p_{1,n}+q_{1,n}
\end{equation*}
and
\begin{equation*}
(\E[K_{1,n+1}\,|\,\mathcal{F}_{n}])^{2}=(K_{1,n}+p_{1,n}-q_{1,n})^{2}
\end{equation*}
almost surely, which reduces to
\begin{equation}
\label{COND2MN_M}
\E[(\Delta M_{1,n+1})^{2}\,|\,\mathcal{F}_{n}]
=b_{1,n+1}^{2}\big(p_{1,n}+q_{1,n} -(p_{1,n}-q_{1,n})^{2}\big)\hspace{1cm} \text{a.s.}
\end{equation}
The above calculations extend easily to the case $r\geq2$. From the sequential construction of the EP model \citep[Proposition 9]{Pit(95)}, we have for all $n \geq 1$,
\begin{equation}\label{DEFMr}
K_{r,n+1}=K_{r,n}+\xi_{r,n+1}
\end{equation}
where the conditional distribution of the random variable $\xi_{r,n+1}$, given the $\sigma$-algebra $\mathcal{F}_{n}=\sigma(K_{r-1,1},\ldots,K_{r-1,n},K_{r,1},\ldots,K_{r,n})$, is such that
\begin{equation}\label{DEFXIMr}
   \dP(\xi_{r,n+1}=k\,|\,\mathcal{F}_{n})=\left \{ \begin{array}{ccc}
    {p_{r,n}}  & \text{ if } & k=1, \vspace{1ex}\\[0.4cm]
    {q_{r,n}}  & \text{ if } & k=-1, \vspace{1ex}\\[0.4cm]
     {1-p_{r,n}-q_{r,n}} & \text{ if }  & k=0,
   \end{array}  \right.
\end{equation}
where
\begin{equation}\label{DEFIPrN}
p_{r,n}=\frac{(r-1-\alpha)K_{r-1,n}}{n+\theta}\qquad\text{and}\qquad q_{r,n}=\frac{(r-\alpha)K_{r,n}}{n+\theta}.
\end{equation}
We refer the reader to Appendix A for more details on \eqref{DEFXIMr}. As before, $\E[\xi_{r,n+1}\,|\,\mathcal{F}_{n}]=p_{r,n}-q_{r,n}$, which implies that
\begin{equation}
\label{CONDM1Mr}
\E[K_{r,n+1}\,|\,\mathcal{F}_{n}]=\beta_{r,n}K_{r,n}+p_{r,n}\qquad\text{a.s.},
\end{equation}
where
\begin{equation}\label{DEFALFANr}
\beta_{r,n}=1-\frac{r-\alpha}{n+\theta}=\frac{n-r+\theta+\alpha}{n+\theta}.
\end{equation}
Let $(b_{r,n})$ be the sequence defined by $b_{r,1}=1$ and for all $n\geq2$,
\begin{equation}
\label{DEFBNr}
b_{r,n}=\prod_{k=r}^{n-1}\beta_{r,k}^{-1}=\prod_{k=r}^{n-1}\left(\frac{k+\theta}{k-r+\theta+\alpha}\right)=\frac{(\theta+1)^{(n-1)}}{(\alpha+\theta)^{(n-r)}}.
\end{equation}
Hereafter, we introduce the sequence of random variables $(M_{r,n})$ that is defined, for all $n\geq1$, by
\begin{equation}\label{DEFMN_Mr}
M_{r,n}=b_{r,n}K_{r,n}-A_{r,n},
\end{equation}
where
\begin{equation}\label{DEFMN_Ar}
A_{r,n}=\sum_{k=1}^{n-1}b_{r,k+1}\left(\frac{r-1-\alpha}{k+\theta}\right)K_{r-1,k}.
\end{equation}
Since 
$A_{r,n+1}=A_{r,n}+p_{r,n}b_{r,n+1}$
and 
$b_{r,n}=\beta_{r,n}b_{r,n+1}$, we have from \eqref{CONDM1Mr} and \eqref{DEFALFANr} that for all $n\geq1$,
\begin{align*}
\E[M_{r,n+1}\,|\,\mathcal{F}_{n}]&=b_{r,n+1}\E[K_{r,n+1}\,|\,\mathcal{F}_{n}]-A_{r,n+1}\\
&=b_{r,n+1}(\beta_{r,n}K_{r,n}+p_{r,n})-A_{r,n}-p_{r,n}b_{r,n+1}\\
&=\beta_{r,n}b_{r,n+1}K_{r,n}-A_{r,n}\\
&=b_{r,n}K_{r,n}-A_{r,n}\\
&=M_{r,n}
\end{align*}
almost surely. In addition, $(M_{r,n})$ is square integrable as $K_{r,n}\leq K_{n}\leq n$. Consequently, $(M_{r,n})$ is a locally square integrable martingale. Because of \eqref{DEFXIMr}, it is not hard to compute all the higher order conditional moments of $(M_{r,n})$, as it was previously done for $r=1$.

\subsection{An alternative approach for the almost-sure convergence}

Based on the above martingale construction, we present an alternative proof of \eqref{as_ep_2}, relying on the strong law of large numbers for martingale. Our proof is more rigorous than \citet[Lemma 3.11]{Pit(06)}, which contains only a sketch for the proof of \eqref{as_ep_2}. 

\begin{thm}
\label{T-ASCVG_M}
Let $K_{r,n}$ be the number of partition subsets of size $r$ in a random partition of $[n]$ distributed according to the EP model with $\alpha\in(0,1)$ and $\theta>-\alpha$. Then, for all $r\geq 1$
\begin{equation}
\label{ASCVG_M}
\lim_{n \rightarrow+\infty}
\frac{K_{r,n}}{n^{\alpha}}=p_{\alpha}(r)S_{\alpha,\theta}\qquad\text{a.s.}
\end{equation}
where 
\begin{displaymath}
p_{\alpha}(r)=\frac{\alpha(1-\alpha)^{(r-1)}}{r!}
\end{displaymath}
and $S_{\alpha,\theta}$ is a positive and almost surely finite random variable whose 
distribution has probability density function \eqref{pitman_div}.
This convergence holds in $\mathbb{L}^p$ for any 
integer $p\geq 1$, 
\begin{equation}
\label{ASCVGLP_M}
 \lim_{n \rightarrow+\infty} \dE\left[ \left| \frac{K_{r,n}}{n^{\alpha}} -p_{\alpha}(r)S_{\alpha,\theta} \right|^p \right]=0.
\end{equation}
\end{thm}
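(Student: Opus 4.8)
The plan is to argue by induction on $r$, using the martingale decomposition $b_{r,n}K_{r,n}=M_{r,n}+A_{r,n}$ of \eqref{DEFMN_Mr}--\eqref{DEFMN_Ar}. The key structural point is that, in contrast with Section \ref{sec1}, the martingale $(M_{r,n})$ grows (roughly like $n^{r}$) yet is negligible relative to its compensator $A_{r,n}$, so that the entire limit is read off from the explicit weighted sum $A_{r,n}$. The constant then telescopes through the elementary identity $p_{\alpha}(r)=\frac{r-1-\alpha}{r}\,p_{\alpha}(r-1)$, valid for $r\ge 2$, which mirrors the factor $r-1-\alpha$ appearing in $p_{r,n}$ in \eqref{DEFIPrN}. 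Two tools carry the argument: the strong law of large numbers for martingales \cite[Theorem 1.3.15]{Duf(97)}, to get $M_{r,n}=o(n^{r})$ a.s.; and a Toeplitz-type averaging lemma, to transfer the a.s.\ convergence of $K_{r-1,k}/k^{\alpha}$ (the induction hypothesis, with $K_{k}/k^{\alpha}\to S_{\alpha,\theta}$ from Theorem \ref{T-ASCVG} serving as base-case input) to $A_{r,n}$. The case $r=1$ is slightly special because $A_{1,n}$ involves $K_{k}$ in place of $K_{0,k}$, and it yields directly $p_{\alpha}(1)S_{\alpha,\theta}=\alpha S_{\alpha,\theta}$.

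For the martingale, I would first record, from \eqref{DEFBNr} and the ratio-of-Gamma-functions expansion, that $b_{r,n}=\frac{\Gamma(\alpha+\theta)}{\Gamma(\theta+1)}\,\frac{\Gamma(n+\theta)}{\Gamma(n+\alpha+\theta-r)}\sim\frac{\Gamma(\alpha+\theta)}{\Gamma(\theta+1)}\,n^{r-\alpha}$, so that $n^{\alpha}b_{r,n}\sim\frac{\Gamma(\alpha+\theta)}{\Gamma(\theta+1)}\,n^{r}$. Then, from the conditional-variance identity \eqref{COND2MN_M} (and its $r\ge 2$ analogue) and the inequality $0\le p_{r,n}+q_{r,n}\le 1$, one has $\dE[(\Delta M_{r,n+1})^{2}\mid\mathcal{F}_{n}]\le b_{r,n+1}^{2}(p_{r,n}+q_{r,n})$; inserting the bounds $K_{r,n}\le K_{n}$ (and $K_{r-1,n}\le K_{n}$) together with the a.s.\ estimate $K_{n}=O(n^{\alpha})$ from Theorem \ref{T-ASCVG} gives $\dE[(\Delta M_{r,n+1})^{2}\mid\mathcal{F}_{n}]=O(n^{2r-\alpha-1})$ a.s., and hence $\langle M_{r}\rangle_{n}=O(n^{2r-\alpha})$ a.s. Since $\alpha>0$, the strong law of large numbers for martingales then yields $M_{r,n}=o(n^{r})$ a.s.: on the event $\{\langle M_{r}\rangle_{\infty}=\infty\}$ because $M_{r,n}=o\big(\langle M_{r}\rangle_{n}^{1/2}(\log\langle M_{r}\rangle_{n})^{1/2+\varepsilon}\big)=o\big(n^{r-\alpha/2}(\log n)^{1/2+\varepsilon}\big)$, and on its complement because $M_{r,n}$ then converges a.s.

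Next I would treat $A_{r,n}$ as a Toeplitz average: from \eqref{DEFMN_Ar}, $A_{r,n}=(r-1-\alpha)\sum_{k=1}^{n-1}w_{k}\,\big(K_{r-1,k}/k^{\alpha}\big)$ with $w_{k}=b_{r,k+1}\,k^{\alpha}/(k+\theta)\sim\frac{\Gamma(\alpha+\theta)}{\Gamma(\theta+1)}\,k^{r-1}$ (for $r=1$ one first absorbs the additive $\theta$ of $p_{1,n}$ into an $O(n^{1-\alpha})$ remainder and uses $K_{k}/k^{\alpha}$ instead). The weights $w_{k}$ are eventually positive with $\sum_{k=1}^{n}w_{k}\sim\frac{\Gamma(\alpha+\theta)}{\Gamma(\theta+1)}\,n^{r}/r\to\infty$, so a pathwise application of the Toeplitz lemma gives $\big(\sum_{k}w_{k}\big)^{-1}\sum_{k}w_{k}(K_{r-1,k}/k^{\alpha})\to p_{\alpha}(r-1)S_{\alpha,\theta}$ a.s., whence, combining with the asymptotics of $b_{r,n}$ and of $\sum_{k}w_{k}$,
\[
\frac{A_{r,n}}{b_{r,n}\,n^{\alpha}}\ \longrightarrow\ \frac{r-1-\alpha}{r}\,p_{\alpha}(r-1)\,S_{\alpha,\theta}\;=\;p_{\alpha}(r)\,S_{\alpha,\theta}\qquad\text{a.s.}
\]
Together with $M_{r,n}/(b_{r,n}n^{\alpha})\to 0$ a.s., this proves \eqref{ASCVG_M}. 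Finally, \eqref{ASCVGLP_M} is essentially automatic: since $K_{r,n}\le K_{n}$ deterministically, Theorem \ref{T-ASCVG} gives $\sup_{n}\dE[(K_{r,n}/n^{\alpha})^{p+1}]\le\sup_{n}\dE[(K_{n}/n^{\alpha})^{p+1}]<+\infty$, so the family $\{(K_{r,n}/n^{\alpha})^{p}\}_{n\ge 1}$ is uniformly integrable, and uniform integrability together with the a.s.\ convergence \eqref{ASCVG_M} gives the $\mathbb{L}^{p}$ convergence; alternatively one may replicate the falling-factorial-moment recursion of Theorem \ref{T-ASCVG} to compute $\lim_{n}n^{-\alpha p}\dE[K_{r,n}^{p}]$ and invoke the Riesz-Scheff\'e theorem.

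The step I expect to be most delicate is making the martingale genuinely negligible: the crude bound $K_{r,n}\le n$ only gives $\langle M_{r}\rangle_{n}=O(n^{2r-2\alpha+1})$, which fails to beat $n^{r}$ when $\alpha\le 1/2$, so one is forced to feed in the sharp a.s.\ order $K_{r,n}=O(n^{\alpha})$ provided by Theorem \ref{T-ASCVG}. A secondary bookkeeping point is keeping the constants in the Gamma-ratio asymptotics of $b_{r,n}$ and of $\sum_{k}w_{k}$ precise enough that the cancellation producing $p_{\alpha}(r-1)\mapsto p_{\alpha}(r)$ comes out exactly.
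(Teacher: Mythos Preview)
Your proposal is correct and, for the almost-sure convergence \eqref{ASCVG_M}, follows the paper's proof essentially line by line: induction on $r$, the decomposition $b_{r,n}K_{r,n}=M_{r,n}+A_{r,n}$, the bound $\langle M_{r}\rangle_{n}=O(n^{2r-\alpha})$ a.s.\ obtained by feeding in $K_{n}=O(n^{\alpha})$ from Theorem~\ref{T-ASCVG}, the strong law of large numbers for martingales to kill $M_{r,n}/n^{r}$, and a Toeplitz average for $A_{r,n}$ that produces the factor $(r-1-\alpha)/r$ and hence $p_{\alpha}(r)$. Your diagnosis of the ``delicate step'' is exactly the one the paper relies on.

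The only substantive difference is in the $\mathbb{L}^{p}$ part \eqref{ASCVGLP_M}. The paper does not use uniform integrability; instead it imports the closed-form expression for the falling factorial moments $\dE[(K_{r,n})_{(p)}]$ from \cite{Fav(13)}, simplifies it via an identity on generalized factorial coefficients to obtain $\lim_{n}n^{-\alpha p}\dE[K_{r,n}^{p}]=\big(p_{\alpha}(r)\big)^{p}\,\dE[S_{\alpha,\theta}^{p}]$, and then appeals to Riesz--Scheff\'e. Your route---$K_{r,n}\le K_{n}$ together with $\sup_{n}\dE[(K_{n}/n^{\alpha})^{p+1}]<\infty$ from Theorem~\ref{T-ASCVG}, hence uniform integrability, hence $\mathbb{L}^{p}$ convergence from the a.s.\ limit---is shorter and self-contained, avoiding the external moment formula; the paper's route, on the other hand, yields the exact moment asymptotics of $K_{r,n}$ as a by-product, which it reuses later (e.g.\ in the Gaussian-fluctuation proof via \eqref{LIMPQVPr1}--\eqref{LIMPQVQr2}).
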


\begin{proof}
The proof is by induction on $r\geq1$. In particular, we start by considering the case $r=1$. We already saw that $(M_{1,n})$ is a locally square integrable martingale. We deduce from \eqref{COND2MN_M} that its predictable quadratic variation is given by
\begin{equation*}
\langle M_{1} \rangle_n  = \sum_{k=1}^{n-1} b_{1,k+1}^{2} \big(p_{1,k}+q_{1,k} -(p_{1,k}-q_{1,k})^{2}\big).
\end{equation*}
Consequently,
\begin{displaymath}
\langle M_{1} \rangle_n\leq \sum_{k=1}^{n-1}b_{1,k+1}^{2}\big(p_{1,k}+q_{1,k}\big).
\end{displaymath}
On the one hand, it follows directly from the definition of $p_{1,n}$ in \eqref{DEFIP1N} together with the almost-sure convergence \eqref{ASCVG} that
\begin{equation}\label{LIM1_P}
\lim_{n\rightarrow+\infty}n^{1-\alpha}p_{1,n}=\alpha S_{\alpha,\theta}\qquad\text{a.s.}
\end{equation}
On the other hand, we have from \eqref{DEFBN1} that 
\begin{equation}\label{LIM1_A}
\lim_{n\rightarrow+\infty}\frac{b_{1,n}}{n^{1-\alpha}}=\frac{\Gamma(\alpha+\theta)}{\Gamma(\theta+1)}.
\end{equation}
Accordingly, as $(n+\theta)q_{1,n}\leq K_n$, we deduce from \eqref{LIM1_P} and
\eqref{LIM1_A}
 through a direct application of Toeplitz lemma that
\begin{displaymath}
\langle M_{1} \rangle_n=O\left(\sum_{k=1}^{n}k^{1-\alpha}\right)=O(n^{2-\alpha})\qquad \text{a.s.}
\end{displaymath}
Therefore, it follows from the strong law of large numbers for martingales given by 
\citep[Theorem 1.3.24]{Duf(97)} that
\begin{displaymath}
(M_{1,n})^{2}=O(n^{2-\alpha}\log n)\qquad \text{a.s.},
\end{displaymath}
which leads to
\begin{displaymath}
(b_{1,n}K_{1,n}-A_{1,n})^{2}=O(n^{2-\alpha}\log n)\qquad \text{a.s.}
\end{displaymath}
It clearly implies that
\begin{equation}
\label{LIM1KA}
\lim_{n\rightarrow+\infty}\frac{b_{1,n}K_{1,n}-A_{1,n}}{n}=0\qquad\text{a.s.}
\end{equation}
Furthermore, by combining \eqref{LIM1_P} and \eqref{LIM1_A} together with \eqref{DEFMN_A}, we find that
\begin{equation}
\label{LIM1AFIN}
\lim_{n\rightarrow+\infty}\frac{A_{1,n}}{n}=\frac{\alpha\Gamma(\alpha+\theta)}{\Gamma(\theta+1)}S_{\alpha,\theta}\qquad \text{a.s.}
\end{equation}
Consequently, we obtain from \eqref{LIM1KA} and \eqref{LIM1AFIN} that
\begin{displaymath}
\lim_{n\rightarrow+\infty}\frac{b_{1,n}K_{1,n}}{n}=\lim_{n\rightarrow+\infty}\frac{A_{1,n}}{n}=\frac{\alpha\Gamma(\alpha+\theta)}{\Gamma(\theta+1)}S_{\alpha,\theta}\qquad \text{a.s.}
\end{displaymath}
Then, we deduce from \eqref{LIM1_A} that
\begin{displaymath}
\lim_{n\rightarrow+\infty}\frac{K_{1,n}}{n^{\alpha}}=\alpha S_{\alpha,\theta}\qquad \text{a.s.}
\end{displaymath}
This completes the proof for $r=1$. For $r\geq2$, we shall proceed by induction, assuming that 
\begin{equation}\label{induct_h}
\lim_{n\rightarrow+\infty}\frac{K_{r-1,n}}{n^{\alpha}}=\frac{\alpha(1-\alpha)^{(r-2)}}{(r-1)!}S_{\alpha,\theta}\qquad \text{a.s.}
\end{equation}
We already saw that $(M_{r,n})$ is a locally square integrable martingale. As in the case
$r=1$, one can easily see that its predictable quadratic variation is given by
\begin{equation}
\label{COND2MN_Mr}
\langle M_{r} \rangle_n=\sum_{k=1}^{n-1}b_{r,k+1}^{2}
\big(p_{r,k}+q_{r,k} -(p_{r,k}-q_{r,k})^{2}\big).
\end{equation}
On the one hand, it follows from the definition of $p_{r,n}$ in \eqref{DEFIPrN} together with the induction hypothesis \eqref{induct_h} that
\begin{eqnarray}
\lim_{n\rightarrow+\infty}n^{1-\alpha}p_{r,n}&=&\frac{(r-1-\alpha)\alpha(1-\alpha)^{(r-2)}}{(r-1)!}
S_{\alpha,\theta}\qquad \text{a.s.}
\nonumber \\
&=&\frac{\alpha(1-\alpha)^{(r-1)}}{(r-1)!}S_{\alpha,\theta}\qquad \text{a.s.}
\label{LIMr_P}
\end{eqnarray}
On the other hand, we have from \eqref{DEFBNr} that 
\begin{equation}\label{LIMr_A}
\lim_{n\rightarrow+\infty}\frac{b_{r,n}}{n^{r-\alpha}}=\frac{\Gamma(\alpha+\theta)}{\Gamma(\theta+1)}.
\end{equation}
Accordingly, as $(n+\theta)q_{r,n}\leq r K_{n}$, we deduce 
from \eqref{COND2MN_Mr}, \eqref{LIMr_P} and \eqref{LIMr_A} together with Toeplitz lemma that
\begin{displaymath}
\langle M_{r} \rangle_n=O\left(\sum_{k=1}^{n}k^{2r-1-\alpha}\right)=O(n^{2r-\alpha})\qquad \text{a.s.}
\end{displaymath}
Therefore, it follows from the strong law of large numbers for martingales that
\begin{displaymath}
(M_{r,n})^{2}=O(n^{2r-\alpha}\log n)\qquad\text{a.s.},
\end{displaymath}
which leads to
\begin{displaymath}
(b_{r,n}K_{r,n}-A_{r,n})^{2}=O(n^{2r-\alpha}\log n)\qquad \text{a.s.}
\end{displaymath}
As before, it implies that
\begin{equation}
\label{LIMKAr}
\lim_{n\rightarrow+\infty}\frac{b_{r,n}K_{r,n}-A_{r,n}}{n^{r}}=0\qquad \text{a.s.}
\end{equation}
Hereafter, by combining \eqref{LIMr_P} and \eqref{LIMr_A} together with \eqref{DEFMN_Ar}, we obtain that
\begin{equation}
\label{LIMAFINr}
\lim_{n\rightarrow+\infty}\frac{A_{r,n}}{n^{r}}=\frac{\Gamma(\alpha+\theta)}{\Gamma(\theta+1)}\frac{\alpha(1-\alpha)^{(r-1)}}{r!}S_{\alpha,\theta}\qquad\text{a.s.}
\end{equation}
Finally, we find from \eqref{LIMKAr} and \eqref{LIMAFINr} that
\begin{displaymath}
\lim_{n\rightarrow+\infty}\frac{b_{r,n}K_{r,n}}{n^{r}}=\lim_{n\rightarrow+\infty}\frac{A_{r,n}}{n^{r}}=\frac{\Gamma(\alpha+\theta)}{\Gamma(\theta+1)}\frac{\alpha(1-\alpha)^{(r-1)}}{r!}S_{\alpha,\theta}\qquad\text{a.s.}
\end{displaymath}
which ensures via \eqref{LIM1_A} that
\begin{displaymath}
\lim_{n\rightarrow+\infty}\frac{K_{r,n}}{n^{\alpha}}=\frac{\alpha(1-\alpha)^{(r-1)}}{r!} S_{\alpha,\theta}\qquad \text{a.s.}
\end{displaymath}
It only remains to prove the convergence in $\mathbb{L}^p$ given by \eqref{ASCVGLP_M}.
We have from \citet[Proposition 1]{Fav(13)} that for any integer $p\geq 1$,
\begin{align}\label{equazione_favaro}
\E[(K_{r,n})_{(p)}]
&=(p_r(\alpha))^{p}\frac{n!}{(n-rp)!}\sum_{k=0}^{n-pr}\frac{\left(\frac{\theta}{\alpha}\right)^{(k+p)}}{(\theta)^{(n)}}\mathscr{C}(n-pr,k;\alpha)\\
%&\E[(K_{r,n})_{(p)}]\\
%&\notag\quad=(p_r(\alpha))^{p}\frac{n!}{(n-rp)!}\sum_{k=0}^{n-pr}\frac{\left(\frac{\theta}{\alpha}\right)^{(k+p)}}{(\theta)^{(n)}}\mathscr{C}(n-pr,k;\alpha)\\
&=(p_r(\alpha))^{p}\frac{n!}{(n-rp)!}\left(\frac{\theta}{\alpha}\right)^{(p)}\sum_{k=0}^{n-pr}\frac{\left(\frac{\theta}{\alpha}+p\right)^{(k)}}{(\theta)^{(n)}}\mathscr{C}(n-pr,k;\alpha) \notag
\end{align}
where $\mathscr{C}(n,k;\alpha)$ is the generalized factorial coefficient \citep[Chapter 2]{Cha(07)},
%, defined as
\begin{displaymath}
\mathscr{C}(n,k;\alpha)=\frac{1}{k!}\sum_{j=1}^{k}(-1)^{j}{k\choose j}(-j\alpha)^{(n)}.
\end{displaymath}
An application of \citep[Equation 2.46]{Cha(07)} allows to solve the summation over $k$ in \eqref{equazione_favaro}. More precisely,
\begin{align*}
\E[(K_{r,n})_{(p)}]=(p_r(\alpha))^{p}\frac{n!}{(n-rp)!}\left(\frac{\theta}{\alpha}\right)^{(p)}\frac{(\theta+\alpha p)^{(n-rp)}}{(\theta)^{(n)}}.
\end{align*}
Consequently,
\begin{equation}
\label{LP_M1}
\lim_{n \rightarrow+\infty} \frac{1}{n^{\alpha p}}\E[(K_{r,n})_{(p)}]=
\big(p_r(\alpha)\big)^{p} \Big(\frac{\theta}{\alpha}\Big)^{\!(p)\!}
\frac{\Gamma(\theta +1)}{\theta \Gamma(\alpha p +\theta)} \Big(\frac{\theta}{\alpha}\Big)^{\!(p)\!}
\end{equation}
However, as in identity \eqref{LIMMKN2}, we have
\begin{equation}
\label{LP_M2}
\dE[K_{r,n}^p]=\sum_{k=0}^p \left\{ 
\begin{matrix}
p \\k 
\end{matrix}
\right\}
\dE[(K_{r,n})_{(k)}].
\end{equation}
Then, we deduce from \eqref{LP_M1} and \eqref{LP_M2} that
\begin{equation}
\label{LP_M3}
\lim_{n \rightarrow+\infty} \frac{1}{n^{\alpha p}}\dE[K_{r,n}^p]=
\big(p_r(\alpha)\big)^{p} 
\frac{\Gamma(\theta +1)}{\theta \Gamma(\alpha p +\theta)}\Big(\frac{\theta}{\alpha}\Big)^{\!(p)\!}= \E\big[\big(p_r(\alpha) S_{\alpha,\theta}\big)^{p}\big].
\end{equation}
Finally, \eqref{ASCVGLP_M}
follows from \eqref{ASCVG_M} and \eqref{LP_M3} together with Riesz-Scheff\'e theorem,
which completes the proof of Theorem \ref{T-ASCVG_M}.

\end{proof}

\begin{remark}
An alternative proof of the convergence in $\mathbb{L}^{2}$ given by \eqref{T-ASCVG_M}, which follows directly from the Poisson process construction of the EP model \citep[Proposition 9]{Pit(03)} can be found in Appendix B. See also \citet[Chapter 4]{Pit(06)}. 
\end{remark}

\subsection{The Gaussian fluctuation}

We shall now focus on a Gaussian fluctuation of $K_{r,n}$, properly normalized,  around an estimator of its almost-sure limit  $p_{\alpha}(r)S_{\alpha,\theta}$.

\begin{thm}
\label{T-FLUCBLOCK_M}
Let $K_{r,n}$ be the number of partition subsets of size $r$ in a random partition of $[n]$ distributed according to the EP model with $\alpha\in(0,1)$ and $\theta>-\alpha$. Then, for all $r\geq 1$
\begin{equation}
\label{ANBLOCK_M1}
\frac{1}{\sqrt{K_{r,n}}} \left(K_{r,n}- \frac{A_{r,n}}{b_{r,n}} \right)
\underset{n\rightarrow+\infty}{\overset{\cL}{\longrightarrow}}
\mathcal{N}(0,1),
\end{equation}
where $\mathcal{N}(0,1)$ denotes a standard Gaussian random variable. Moreover, we also
have for all $r\geq 1$ 
\begin{equation}
\label{ANBLOCK_M2}
\sqrt{n^\alpha} \left(\frac{K_{r,n}}{n^{\alpha}}- \frac{A_{r,n}}{b_{r,n}n^\alpha} \right)
\underset{n\rightarrow+\infty}{\overset{\cL}{\longrightarrow}}
\sqrt{p_\alpha(r) S_{\alpha,\theta}^\prime}\, \mathcal{N}(0,1),
\end{equation}
where 
$S_{\alpha,\theta}^\prime$ is a random variable independent of $\mathcal{N}(0,1)$ and sharing the same distribution as $S_{\alpha,\theta}$.
\end{thm}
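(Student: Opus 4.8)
The plan is to follow the blueprint of the proof of Theorem~\ref{T-FLUCBLOCK}, with one essential change: the martingale $(M_{r,n})$ does \emph{not} converge almost surely (indeed $b_{r,n}\to+\infty$), so the convergent-martingale results of \cite{Hey(77)} cannot be invoked for $(M_{r,n})$ directly. Instead I would apply a central limit theorem for locally square integrable martingales whose predictable quadratic variation diverges, in its stable form (e.g.\ \cite{Hal(80)}). Write $\Delta M_{r,n+1}=M_{r,n+1}-M_{r,n}$ and recall, from \eqref{COND2MN_M} when $r=1$ and \eqref{COND2MN_Mr} when $r\geq2$, that
$$
\langle M_{r}\rangle_{n}=\sum_{k=1}^{n-1}b_{r,k+1}^{2}\big(p_{r,k}+q_{r,k}-(p_{r,k}-q_{r,k})^{2}\big)\qquad\text{a.s.}
$$

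The first step is to determine the almost-sure asymptotics of $\langle M_{r}\rangle_{n}$. Since $(p_{r,k}-q_{r,k})^{2}=O(k^{2\alpha-2})$ is negligible against $p_{r,k}+q_{r,k}=O(k^{\alpha-1})$, the limits \eqref{LIMr_P} and \eqref{LIMr_A}, together with \eqref{ASCVG_M} applied to $q_{r,k}$ and \eqref{ASCVG} when $r=1$, give
$$
b_{r,k+1}^{2}\big(p_{r,k}+q_{r,k}-(p_{r,k}-q_{r,k})^{2}\big)\sim\Big(\frac{\Gamma(\alpha+\theta)}{\Gamma(\theta+1)}\Big)^{2}(2r-\alpha)\,p_{\alpha}(r)\,S_{\alpha,\theta}\,k^{2r-\alpha-1}\qquad\text{a.s.},
$$
where the combinatorial identity $(r-1-\alpha)p_{\alpha}(r-1)=r\,p_{\alpha}(r)$ is used. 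A Toeplitz (Cesàro) summation then yields
$$
\lim_{n\rightarrow+\infty}\frac{\langle M_{r}\rangle_{n}}{n^{2r-\alpha}}=\Big(\frac{\Gamma(\alpha+\theta)}{\Gamma(\theta+1)}\Big)^{2}p_{\alpha}(r)\,S_{\alpha,\theta}\qquad\text{a.s.},
$$
and, comparing with \eqref{LIMr_A} and \eqref{ASCVG_M}, in particular $\langle M_{r}\rangle_{n}/\big(b_{r,n}^{2}K_{r,n}\big)\to1$ a.s.

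The second step is the conditional Lindeberg condition. Since $\xi_{r,n+1}\in\{-1,0,1\}$, we have $|\Delta M_{r,n+1}|\leq2b_{r,n+1}=O(n^{r-\alpha})$, while $\sqrt{\langle M_{r}\rangle_{n}}$ is of order $n^{r-\alpha/2}$; because $\alpha>0$ this forces, for every $\eta>0$ and almost every sample path, all the truncated increments $\Delta M_{r,k+1}\,\rI_{\{|\Delta M_{r,k+1}|>\eta\sqrt{\langle M_{r}\rangle_{n}}\}}$ with $1\leq k\leq n-1$ to vanish once $n$ is large, so the condition holds trivially. With these two ingredients, the stable martingale central limit theorem gives $M_{r,n}/\sqrt{\langle M_{r}\rangle_{n}}\overset{\cL}{\longrightarrow}\mathcal{N}(0,1)$, the convergence being stable, hence with the limiting Gaussian independent of $S_{\alpha,\theta}$. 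Because $M_{r,n}=b_{r,n}K_{r,n}-A_{r,n}$, one has $\big(K_{r,n}-A_{r,n}/b_{r,n}\big)/\sqrt{K_{r,n}}=M_{r,n}/\big(b_{r,n}\sqrt{K_{r,n}}\big)$, so Slutsky's lemma together with $\langle M_{r}\rangle_{n}/\big(b_{r,n}^{2}K_{r,n}\big)\to1$ a.s.\ yields \eqref{ANBLOCK_M1}.

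For \eqref{ANBLOCK_M2} I would write
$$
\sqrt{n^{\alpha}}\Big(\frac{K_{r,n}}{n^{\alpha}}-\frac{A_{r,n}}{b_{r,n}n^{\alpha}}\Big)=\frac{M_{r,n}}{b_{r,n}\sqrt{n^{\alpha}}}=\frac{M_{r,n}}{b_{r,n}\sqrt{K_{r,n}}}\cdot\sqrt{\frac{K_{r,n}}{n^{\alpha}}}
$$
and use the stability of the convergence in \eqref{ANBLOCK_M1}: the limiting Gaussian is independent of the almost-sure limit $\sqrt{p_{\alpha}(r)S_{\alpha,\theta}}$ of the second factor (by \eqref{ASCVG_M}), so the product converges in law to $\sqrt{p_{\alpha}(r)S_{\alpha,\theta}^{\prime}}\,\mathcal{N}(0,1)$ with $S_{\alpha,\theta}^{\prime}$ an independent copy of $S_{\alpha,\theta}$, which is \eqref{ANBLOCK_M2}. (It is worth noting that $A_{r,n}/(b_{r,n}n^{\alpha})\to p_{\alpha}(r)S_{\alpha,\theta}$ a.s.\ by \eqref{LIMAFINr}, so the centering in both statements is a genuine, observable estimator of the limit.) The main obstacle is precisely the divergence of $(M_{r,n})$: one must substitute a growing-variance (stable) martingale CLT for the convergent-martingale machinery of \cite{Hey(77)} used in Theorem~\ref{T-FLUCBLOCK}, and keep track of the stable convergence carefully enough to legitimately factor out the random mixing variance $S_{\alpha,\theta}$ in the passage from the self-normalized \eqref{ANBLOCK_M1} to \eqref{ANBLOCK_M2}; a secondary point is that the Cesàro asymptotics of $\langle M_{r}\rangle_{n}$ must be threaded through the induction hypothesis \eqref{induct_h}/\eqref{ASCVG_M}, with the $r=1$ case handled via $K_{n}$ and \eqref{ASCVG}.
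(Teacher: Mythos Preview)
Your approach is essentially the one the paper takes: compute the a.s.\ asymptotics of $\langle M_r\rangle_n$ via \eqref{LIMr_P}, \eqref{LIMr_A} and \eqref{ASCVG_M}, check a Lindeberg condition, apply a martingale CLT to $(M_{r,n})$, and finish with Slutsky. Two minor points of comparison are worth making. First, your worry that \cite{Hey(77)} is inapplicable because $(M_{r,n})$ diverges is misplaced: the first part of Theorem~1 and Corollary~1 in \cite{Hey(77)} deal precisely with the regime $s_n^2\to\infty$ and random-norming limit $\langle M\rangle_n/s_n^2\to\eta^2$ a.s., and that is exactly what the paper invokes here (the ``second part'' language was used only in Theorem~\ref{T-FLUCBLOCK}, for the convergent martingale $(M_n)$). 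So there is no need to switch to \cite{Hal(80)}; the paper also computes $s_{r,n}^2=\E[\langle M_r\rangle_n]\sim c\,n^{2r-\alpha}$ using the $\mathbb{L}^p$ convergences in \eqref{ASCVGLP_M}, and then reads off both \eqref{ASCVGPQVr9} and \eqref{ASCVGPQVr10} from Heyde's Corollary~1. Second, your Lindeberg verification is cleaner than the paper's: you use $|\Delta M_{r,k+1}|\le 2b_{r,k+1}\le 2b_{r,n}=O(n^{r-\alpha})$ against a threshold of order $n^{r-\alpha/2}$, so the truncated sum is eventually identically zero, whereas the paper bounds fourth conditional moments and sums. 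Both routes are valid; yours avoids the fourth-moment computation.
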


\begin{proof}
We are going to carry out the proof for any $r\geq1$. It follows from \eqref{ASCVG_M},
\eqref{COND2MN_Mr},  \eqref{LIMr_P} and \eqref{LIMr_A} together with Toeplitz lemma that the 
predictable quadratic variation $\langle M_{r} \rangle_n$ satisfies
\begin{equation}
\label{ASCVGPQVr1}
\lim_{n\rightarrow+\infty}\frac{\langle M_{r} \rangle_n}{n^{2r-\alpha}}
=p_\alpha(r) 
  \left( \frac{\Gamma(\alpha+\theta)}{\Gamma(\theta+1)}\right)^2S_{\alpha,\theta}\hspace{1cm} \text{a.s.}
\end{equation}
Moreover, we have from \eqref{ASCVGLP_M} with $p=1$ and $p=2$ that
\begin{equation}
\label{LIMPQVPr1}
\lim_{n\rightarrow+\infty}n^{1-\alpha} \dE[p_{r,n}]=(r-1-\alpha)p_\alpha(r-1) \dE[S_{\alpha, \theta}]
\end{equation}
and
\begin{equation}
\label{LIMPQVPr2}
\lim_{n\rightarrow+\infty}n^{2-2\alpha} \dE[p_{r,n}^2]=(r-1-\alpha)^2p^2_\alpha(r-1) \dE[S_{\alpha, \theta}^2].
\end{equation}
In addition, we also have from \eqref{ASCVGLP_M} with $p=1$ and $p=2$ that
\begin{equation}
\label{LIMPQVQr1}
\lim_{n\rightarrow+\infty}n^{1-\alpha} \dE[q_{r,n}]=(r-\alpha) p_\alpha(r) \dE[S_{\alpha, \theta}]
\end{equation}
and
\begin{equation}
\label{LIMPQVQr2}
\lim_{n\rightarrow+\infty}n^{2-2\alpha} \dE[q_{r,n}^2]=(r-\alpha)^2 p^2_\alpha(r) \dE[S_{\alpha, \theta}^2].
\end{equation}
Therefore, denote $s_{r,n}^2=\dE[\langle M_{r} \rangle_n]$. We deduce from \eqref{COND2MN_Mr} and \eqref{LIMr_A} together with \eqref{LIMPQVPr1}, \eqref{LIMPQVPr2}, \eqref{LIMPQVQr1} and \eqref{LIMPQVQr2} that
\begin{eqnarray}
\lim_{n\rightarrow+\infty}\frac{s_{r,n}^2}{n^{2r-\alpha}}
&=&\lim_{n\rightarrow+\infty}\frac{1}{n^{2r-\alpha}}
\sum_{k=1}^{n-1} b_{r,k+1}^2 \big( \dE[p_{r,k}]+\dE[q_{r,k}]\big), \notag \\
&=& 
\frac{p_\alpha(r)}{\alpha}
\left(\frac{\Gamma(\alpha+\theta)}{\Gamma(\theta+1)}\right).
\label{ASCVGPQVr2}
\end{eqnarray}
Hence, we obtain from \eqref{ASCVGPQVr1} and \eqref{ASCVGPQVr2} that
\begin{equation}
\label{ASCVGPQVr3}
\lim_{n\rightarrow+\infty}\frac{\langle M_{r} \rangle_n}{s_{r,n}^2}
= \alpha \left( \frac{\Gamma(\alpha+\theta)}{\Gamma(\theta+1)}\right)S_{\alpha,\theta}\hspace{1cm} \text{a.s.}
\end{equation} 
We are now going to show that for any $\eta >0$,
\begin{equation}\label{ASCVGPQVr4}
\lim_{n \rightarrow+\infty} \frac{1}{n^{2r-\alpha}}\sum_{k=1}^{n}\dE\big[\Delta M_{r,k}^2 \rI_{\{|\Delta M_{r, k}|>\eta s_{r,n} \}}\big]=0.
\end{equation}
We clearly have  for any $\eta >0$,
$$
\sum_{k=1}^{n}\dE\big[\Delta M_{r,k}^2 \rI_{\{|\Delta M_{r, k}|>\eta s_{r,n} \}}\big]
\leq 
\frac{1}{\eta^2 s_{r,n}^2}\sum_{k=1}^{n}\dE\big[\Delta M_{r, k}^4\big].
$$
According to \eqref{ASCVGPQVr2}, in order to prove \eqref{ASCVGPQVr4}, 
it is only necessary to show that
\begin{equation}\label{ASCVGPQVr5}
\lim_{n \rightarrow+\infty} \frac{1}{n^{4r-2\alpha}}\sum_{k=1}^{n}
\dE\big[\Delta M_{r, k}^4\big]=0.
\end{equation}
As in the proof of \eqref{DECMN4COND}, it follows from tedious but straightforward calculations that for all $n \geq 1$,
\begin{equation}
\label{ASCVGPQVr6}
\dE\big[\Delta M_{r,n+1}^4 | \cF_n\big]=b_{r,n+1}^4 \sum_{\ell=1}^4 e_{r,n}(\ell)
\end{equation}
where
\begin{align*}
e_{r,n}(1)&=(p_{r,n}+q_{r,n}), \\
e_{r,n}(2)&=-4(p_{r,n}-q_{r,n})^2, \\
e_{r,n}(3)&=6(p_{r,n}+q_{r,n})(p_{r,n}-q_{r,n})^2, \\
e_{r,n}(4)&=-3(p_{r,n}-q_{r,n})^4.
\end{align*}
Then, it follows from \eqref{ASCVGPQVr6} that it exists some constant $C>0$ such that for $n$ large enough,
\begin{equation}
\label{ASCVGPQVr7}
\dE\big[\Delta M_{r,n+1}^4 | \cF_n\big] \leq  C b_{r,n+1}^4 \big(p_{r,n}+q_{r,n}\big).
\end{equation}
Taking the expectation on both sides of \eqref{ASCVGPQVr7}, we obtain that for $n$ large enough,
\begin{equation}
\label{ASCVGPQVr8}
\dE\big[\Delta M_{r,n+1}^4 \big]\leq C b_{r,n+1}^4 \big(\dE[p_{r,n}]+\dE[q_{r,n}]\big).
\end{equation}
Consequently, we deduce from \eqref{LIMr_A}, \eqref{LIMPQVPr1}, \eqref{LIMPQVQr1} and \eqref{ASCVGPQVr8} that
\begin{equation*}
\sum_{k=1}^{n}\dE\big[\Delta M_{r, k}^4\big]=O\left( \sum_{k=1}^n k^{4r-1-3\alpha} \right)=O(n^{4r-3\alpha})
\end{equation*}
which leads to \eqref{ASCVGPQVr5}. Hence, we find from Corollary 1 in \cite{Hey(77)} that
\begin{equation}
\label{ASCVGPQVr9}
\frac{M_{r,n}}{\sqrt{\langle M_{r} \rangle_n}} \stackrel{\mathcal{L}}{\longrightarrow} \mathcal{N}(0, 1),
\end{equation}
and
\begin{equation}
\label{ASCVGPQVr10}
\frac{M_{r,n}}{s_{r,n}}
\underset{n\rightarrow+\infty}{\overset{\cL}{\longrightarrow}}
\sqrt{ \frac{ \alpha \Gamma(\alpha+\theta)}{\Gamma(\theta+1)}}\,\sqrt{S_{\alpha,\theta}^\prime} \mathcal{N}(0,1)
\end{equation}
where $S_{\alpha,\theta}^\prime$ stands for a random variable independent of $\mathcal{N}(0,1)$ and sharing the same distribution as $S_{\alpha,\theta}$.
Finally, as $M_{r,n}=b_{r,n}K_{r,n} -A_{r,n}$, the Gaussian fluctuations \eqref{ANBLOCK_M1} and
\eqref{ANBLOCK_M2} follow from \eqref{LIMr_A}, \eqref{ASCVGPQVr9} and \eqref{ASCVGPQVr10}
together with the almost-sure convergences
\eqref{ASCVG_M}, \eqref{ASCVGPQVr1}, \eqref{ASCVGPQVr3} and Slutsky's lemma, 
 which achieves the proof of Theorem \ref{T-FLUCBLOCK_M}.
\end{proof}

\subsection{The law of the iterated logarithm}

We conclude our asymptotic analysis of $K_{r,n}$ by establishing the law of the iterated logarithm.

\begin{thm}
\label{T-LILBLOCK_M}
Let $K_{r,n}$ be the number of partition subsets of size $r$ in a random partition of $[n]$ distributed according to the EP model with $\alpha\in(0,1)$ and $\theta>-\alpha$. Then, for all $r\geq 1$
\begin{eqnarray}
\label{LILBLOCK1_M}
 & & \hspace{-4ex} \limsup_{n \rightarrow \infty}
\sqrt{\frac{n^\alpha}{2 \log \log n}} \left(\frac{K_{r,n}}{n^{\alpha}}- \frac{A_{r,n}}{b_{r,n}n^\alpha} \right)  \\
&=& -\liminf_{n \rightarrow \infty} \sqrt{\frac{n^\alpha}{2 \log \log n}} \left(\frac{K_{r,n}}{n^{\alpha}}- \frac{A_{r,n}}{b_{r,n}n^\alpha} \right) \nonumber \\
&=& \sqrt{p_{\alpha}(r) S_{\alpha,\theta}}\qquad\text{a.s.}
\nonumber
\end{eqnarray}
In particular,
\begin{equation*}
 \limsup_{n \rightarrow \infty}
\frac{n^\alpha}{2 \log \log n} \left(\frac{K_{r,n}}{n^{\alpha}}- \frac{A_{r,n}}{b_{r,n}n^\alpha} \right)^2 =p_{\alpha}(r) S_{\alpha,\theta}\qquad\text{a.s.}
\end{equation*}
\end{thm}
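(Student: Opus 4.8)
\textbf{Proof proposal for Theorem \ref{T-LILBLOCK_M}.}

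The plan is to mimic exactly the argument used for $K_n$ in Theorem \ref{T-LILBLOCK}, but now working with the martingale $(M_{r,n})$ introduced in \eqref{DEFMN_Mr} instead of $(M_n)$. The key difference is that $(M_{r,n})$ does not converge almost surely; instead its predictable quadratic variation $\langle M_r\rangle_n$ grows like $n^{2r-\alpha}$, as established in \eqref{ASCVGPQVr1}. The law of the iterated logarithm from \cite{Hey(77)} applies to martingales whose quadratic variation diverges, so this is in fact the natural regime. I would begin by recording the ingredients already proven in the previous subsections: the almost-sure asymptotics \eqref{ASCVGPQVr1} for $\langle M_r\rangle_n$ and \eqref{ASCVGPQVr3} for $\langle M_r\rangle_n/s_{r,n}^2$, together with the fourth-moment bounds \eqref{ASCVGPQVr7} and \eqref{ASCVGPQVr8} and the resulting estimate $\sum_{k=1}^n \dE[\Delta M_{r,k}^4]=O(n^{4r-3\alpha})$.

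The main work is to verify the hypotheses of the strong-law / LIL part of \cite{Hey(77)} (Theorem 1 and Corollary 2 there), which for the growth rate $v_n^2 = s_{r,n}^2 \asymp n^{2r-\alpha}$ require:  first, a conditional Lindeberg-type summability condition, namely that for every $\eta>0$,
\begin{equation*}
\sum_{n=1}^{\infty} \frac{1}{v_n}\dE\big[|\Delta M_{r,n}|\,\rI_{\{|\Delta M_{r,n}|>\eta v_n/\sqrt{\log\log v_n}\}}\big]<\infty,
\end{equation*}
which I would obtain by a Markov/Chebyshev bound replacing the indicator by $(|\Delta M_{r,n}|/\eta v_n)^3\log\log v_n$ or similar and invoking $\sum_n \dE[\Delta M_{r,n}^4]/v_n^? < \infty$ exactly as in \eqref{LIL1}; second, a truncated fourth-moment summability $\sum_n v_n^{-?}\dE[\Delta M_{r,n}^4\,\rI_{\{|\Delta M_{r,n}|\le \delta v_n\}}]<\infty$, again immediate from the global bound $\sum_k\dE[\Delta M_{r,k}^4]=O(n^{4r-3\alpha})$ and $v_n^4\asymp n^{4r-2\alpha}$; and third, the almost-sure convergence of the auxiliary martingale
\begin{equation*}
P_{r,n+1}=\sum_{k=1}^{n} c_k\big(\Delta M_{r,k+1}^2-\dE[\Delta M_{r,k+1}^2\,|\,\cF_k]\big),
\end{equation*}
with an appropriate deterministic normalizing sequence $c_k\asymp k^{-(2r-\alpha)}$ (analogous to $k^\alpha$ in \eqref{DEFMARTPN}, now scaled so that $\langle M_r\rangle_n$ is renormalized to a constant). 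The convergence of $(P_{r,n})$ would follow from computing $\langle P_r\rangle_n$ via \eqref{ASCVGPQVr6}, bounding each term $e_{r,k}(\ell)$ using \eqref{ASCVGPQVr7}, and checking $\sum_k c_k^2 b_{r,k}^4(p_{r,k}+q_{r,k})<\infty$ a.s. with the help of \eqref{LIMr_P}, \eqref{LIMr_A} and the strong law of large numbers for martingales as in \eqref{CVGMARTCN}.

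Once these conditions are in place, \cite{Hey(77)} yields directly
\begin{equation*}
\limsup_{n\rightarrow\infty}\frac{M_{r,n}}{\sqrt{2\langle M_r\rangle_n\log\log \langle M_r\rangle_n}}=-\liminf_{n\rightarrow\infty}\frac{M_{r,n}}{\sqrt{2\langle M_r\rangle_n\log\log \langle M_r\rangle_n}}=1\qquad\text{a.s.}
\end{equation*}
Then I would substitute $M_{r,n}=b_{r,n}K_{r,n}-A_{r,n}$, use $\langle M_r\rangle_n\sim p_\alpha(r)(\Gamma(\alpha+\theta)/\Gamma(\theta+1))^2 S_{\alpha,\theta}\,n^{2r-\alpha}$ from \eqref{ASCVGPQVr1} so that $\log\log\langle M_r\rangle_n\sim\log\log n$, and divide through by $b_{r,n}\sim (\Gamma(\alpha+\theta)/\Gamma(\theta+1))\,n^{r-\alpha}$ via \eqref{LIMr_A}; the powers $n^{r-\alpha}$ and the Gamma-ratio constants cancel cleanly, leaving
\begin{equation*}
\limsup_{n\rightarrow\infty}\sqrt{\frac{n^\alpha}{2\log\log n}}\left(\frac{K_{r,n}}{n^\alpha}-\frac{A_{r,n}}{b_{r,n}n^\alpha}\right)=\sqrt{p_\alpha(r)S_{\alpha,\theta}}\qquad\text{a.s.},
\end{equation*}
and similarly for the liminf; squaring gives the final displayed identity. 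I expect the main obstacle to be purely bookkeeping: tracking the exponent $2r-\alpha$ through Heyde's conditions (the normalization that made $k^\alpha$ natural for $K_n$ must be rescaled) and making sure $\log\log\langle M_r\rangle_n$ can be replaced by $\log\log n$ — both are routine once the analogy with Theorem \ref{T-LILBLOCK} is set up, since all the hard estimates \eqref{ASCVGPQVr1}, \eqref{ASCVGPQVr3}, \eqref{ASCVGPQVr7}, \eqref{ASCVGPQVr8} are already available from the proof of Theorem \ref{T-FLUCBLOCK_M}.
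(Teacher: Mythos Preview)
Your proposal is correct and follows essentially the same route as the paper's proof: the paper verifies Heyde's conditions for $(M_{r,n})$ via the fourth-moment bound \eqref{ASCVGPQVr8} (yielding the two summability conditions with the normalization $n^r/\sqrt{n^\alpha}=n^{(2r-\alpha)/2}$ you anticipate), shows almost-sure convergence of the auxiliary martingale $P_{r,n+1}=\sum_{k=1}^n (k^{\alpha}/k^{2r})\big(\Delta M_{r,k+1}^2-\dE[\Delta M_{r,k+1}^2\,|\,\cF_k]\big)$ --- exactly your $c_k\asymp k^{-(2r-\alpha)}$ --- and then translates the resulting LIL for $M_{r,n}$ back via \eqref{LIMr_A} and \eqref{ASCVGPQVr1}. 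The only cosmetic difference is that the paper writes $\log\log n$ directly rather than passing through $\log\log\langle M_r\rangle_n$.
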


\begin{proof} As in Section \ref{sec1}, the proof is a direct application of \cite{Hey(77)}. 
We have for any $\eta >0$,
\begin{equation}
\label{LILBLOCK1PR}
\sum_{n=1}^{\infty}\frac{n^{\alpha/2}}{n^r} \dE\big[|\Delta M_{r,n}| 
\rI_{\{ \sqrt{n^{\alpha}}  |\Delta M_{r,n}|>\eta n^r \}}\big]\leq \frac{1}{\eta^3}\sum_{n=1}^{\infty} \frac{n^{2\alpha}}{n^{4r}}\dE\big[\Delta M_{r,n}^4\big].
\end{equation}
Hence, we deduce from  \eqref{ASCVGPQVr8} and \eqref{LILBLOCK1PR} together with \eqref{LIMr_A}, \eqref{LIMPQVPr1} and \eqref{LIMPQVQr1} that it exists
some constant $C>0$ such that
\begin{equation*}
\sum_{n=1}^{\infty}\frac{n^{\alpha/2}}{n^r} \dE\big[|\Delta M_{r,n}| 
\rI_{\{ \sqrt{n^{\alpha}}  |\Delta M_{r,n}|>\eta n^r \}}\big]\leq C\sum_{n=1}^{\infty} \frac{1}{n^{1+\alpha}}< \infty.
\end{equation*}
Moreover, we also have for any $\delta >0$,
\begin{equation*}
\sum_{n=1}^{\infty} \frac{n^{2\alpha}}{n^{4r}}
\dE\big[\Delta M_{r,n}^4 \rI_{\{\sqrt{n^{\alpha}}  |\Delta M_{r,n}|\leq \delta n^r \}}\big]\leq \sum_{n=1}^{\infty} \frac{n^{2\alpha}}{n^{4r}}\dE\big[\Delta M_{r,n}^4\big]< \infty.
\end{equation*}
Hereafter, let $(P_{r,n})$ be the martingale defined by
\begin{equation}
\label{DEFMARTPNBLOCK}
P_{r,n+1}=\sum_{k=1}^n \frac{k^{\alpha}}{k^{2r}} \big(\Delta M_{r,k+1}^2 - \dE[\Delta M_{r,k+1}^2\,|\, \cF_{k}]\big).
\end{equation}
Its predictable quadratic variation is given by
$$
\langle P_r \rangle_{n+1} = \sum_{k=1}^n \frac{k^{2\alpha}}{k^{4r}}
\big(\dE[\Delta M_{k+1}^4\,|\, \cF_{k}]- (\dE[\Delta M_{k+1}^2\,|\,\cF_{k}])^2\big).
$$
Hence, we obtain from \eqref{ASCVGPQVr7} that it exists some constant $C>0$ such that for $n$ large enough,
\begin{equation}
\label{CVGMARTCNBLOCK}
\langle P_r \rangle_{n} \leq C \sum_{k=1}^n \frac{k^{2\alpha}}{k^{4r}} 
b_{r,k+1}^4 \big(p_{r,k}+q_{r,k}\big).
\end{equation}
Consequently, we deduce from \eqref{COND2MN_Mr}, \eqref{LIMr_P} and \eqref{CVGMARTCNBLOCK} that $\langle P_r \rangle_n$ converges 
a.s. to a finite random variable. Then, it follows from the strong law of large numbers 
for martingales that $(P_{r,n})$ converges a.s. to a finite random variable.
Therefore, all the conditions of the second part of Theorem 1 and Corollary 2 in \cite{Hey(77)} are satisfied, which ensures that
\begin{equation}
\label{LILBLOCK2PR}
\limsup_{n \rightarrow \infty} \frac{M_{r,n}}{\sqrt{2 \langle M_r \rangle_{n} \log \log n}}
= -\liminf_{n \rightarrow \infty} \frac{M_{r,n}}{\sqrt{2 \langle M_r \rangle_{n} \log \log n}}
= 1\quad\text{a.s.}
\end{equation}
Finally, as $M_{r,n}=b_{r,n}K_{r,n} -A_{r,n}$, the law of iterated logarithm \eqref{LILBLOCK1_M}
follows from \eqref{LIMr_A} together with the almost-sure convergence \eqref{ASCVGPQVr1} and \eqref{LILBLOCK2PR}, which completes the proof of Theorem \ref{T-LILBLOCK_M}. 
\end{proof}

%%%%%%%%%%%%%%%%%%%%%%%%%%%%%%%%
%%%%%%%%%%%%%%%%%%%%%%%%%%%%%%%%
%%%%%%%%%%%%%%%%%%%%%%%%%%%%%%%%
%%%%%%%%%%%%%%%%%%%%%%%%%%%%%%%%

\section{Discussion}\label{sec3}

In this paper, we presented a unified and comprehensive treatment of the large $n$ asymptotic behaviour of $K_{n}$ and $K_{r,m}$ in the EP model with $\alpha\in(0,1)$ and $\theta>-\alpha$. By means of a novel martingale construction for $K_{n}$ and $K_{r,m}$, we obtained alternative, and rigorous, proofs of the almost-sure convergence of $K_{n}$ and $K_{r,n}$, and also covered the gap of Gaussian fluctuations. We argue that our martingale approach may be further investigated to refine Theorem \ref{T-ASCVG} and Theorem \ref{T-ASCVG_M} in terms of Berry–Esseen theorems, as well as sharp large deviations and concentration inequalities for $K_{n}$ and $K_{r,n}$. We refer to \citet{Fen(98)}, \citet{Fav(15)}, \citet{Fav(18)}, \citet{Dol(20)} and \citet{Oli(22)} for early results along these lines of research. Investigating the large $n$ asymptotic behaviour of functions that involve both $K_{n}$ and the $K_{r,n}$'s  is also a promising direction for future research. Our almost-sure limits imply
\begin{displaymath}
\lim_{n\rightarrow+\infty}\frac{K_{1,n}}{K_{n}}=\alpha\qquad\text{a.s.},
\end{displaymath}
meaning that $K_{1,n}/K_{n}$ is a consistent estimator for the parameter $\alpha$. Establishing a Gaussian fluctuation for $K_{1,n}/K_{n}$ is an interesting open problem, with potential applications in the context of Bayesian nonparametric statistics \citep{Bal(23),Fav(23)}. A further problem of interest is to establish an almost-sure limit and a Gaussian fluctuation for $(K_{1,n},K_{2,n},\ldots)$, thus providing a counterpart of \eqref{w_e_2}.

%%%%%%%%%%%%%%%%%%%%%%%%%%%%%%%%
%%%%%%%%%%%%%%%%%%%%%%%%%%%%%%%%
%%%%%%%%%%%%%%%%%%%%%%%%%%%%%%%%
%%%%%%%%%%%%%%%%%%%%%%%%%%%%%%%%

\section*{Appendix A. Sequential or generative construction of the EP model}
\renewcommand{\thesection}{\Alph{section}}
\renewcommand{\theequation}{\thesection.\arabic{equation}}
\setcounter{section}{1}
\setcounter{equation}{0}
\setcounter{thm}{0}

The purpose of this appendix is to recall the sequential construction of the EP model given in the seminal work of \citet[Proposition 9]{Pit(95)}. For any $\alpha\in[0,1)$, $\theta>-\alpha$, an exchangeable random partition of
$[n]=\{1,\ldots,n \}$ can be recursively constructed as follows: 
conditionally on the total size $K_n=k$ of the partition and on the partition subsets $\{A_1, \ldots ,A_k \}$ of corresponding sizes $(n_1, \ldots,n_k)$, the partition $[n+1]$ is an extension of $[n]$ such that the element $n+1$ is attached to subset $A_i$ for $1 \leq i \leq k$, with probability
$$
\frac{n_i- \alpha}{n+ \theta},
$$
or forms a new subset with probability
$$
\frac{\alpha k + \theta}{n+ \theta}.
$$
Since $n=n_1+ \cdots+n_k$, we clearly have
\begin{equation*}
    \frac{1}{n+\theta}\sum_{i=1}^{k} (n_i - \alpha) + \frac{ \alpha k + \theta}{n+\theta}= \frac{n-\alpha k + \alpha k + \theta}{n+ \theta}=1.
\end{equation*}
Hereafter, for $K_{n}\in\{1,\ldots,n\}$, denote by $\mathbf{N}_{n}=(N_{1,n},\ldots,N_{K_{n},n})$ the sizes of the
partition subsets $\{A_1, \ldots,A_{K_n}\}$. 
\citet[Proposition 9]{Pit(95)} shows that the above construction
leads to the joint distribution \eqref{epsm} of the random vector
$(K_{n},\mathbf{N}_{n})$. This is the reason with the above construction 
is referred to as the sequential or generative construction of the EP model.
Equation \eqref{sequential_k} clearly follows from the  
above construction since
\begin{displaymath}
\P(K_{n+1}=K_{n}+1\,|\,K_{n})=\frac{\alpha K_{n}+ \theta}{n+\theta}.
\end{displaymath}
Equation \eqref{DEFXIM1} also follows from the  
above construction as
\begin{align*}
&\P(\xi_{1,n+1}=1\,|\,\mathcal{F}_{n})=\frac{\alpha K_{n}+ \theta}{n+\theta},
\end{align*}
while for $n_1=1$,
\begin{displaymath}
\P(\xi_{1,n+1}=-1\,|\,\mathcal{F}_{n})=\frac{(1-\alpha)K_{1,n}}{n+\theta}.
\end{displaymath}
Moreover, we also deduce Equation \eqref{DEFXIMr} from the  
above construction since for all $r \geq 2$ and for $n_i=r-1$,
\begin{displaymath}
\P(\xi_{r,n+1}=1\,|\,\mathcal{F}_{n})=\frac{(r-1-\alpha)K_{r-1,n}}{n+\theta},
\end{displaymath}
whereas for $n_i=r$
\begin{displaymath}
\P(\xi_{r,n+1}=-1\,|\,\mathcal{F}_{n})=\frac{(r-\alpha)K_{r,n}}{n+\theta}.
\end{displaymath}
Finally, it shows that Equations \eqref{sequential_k}, \eqref{DEFXIM1} and \eqref{DEFXIMr} follow from the sequential or generative construction of the EP model.

\section*{Appendix B. Alternative proofs of the convergence in $\mathbb{L}^2$}
\renewcommand{\thesection}{\Alph{section}}
\renewcommand{\theequation}{\thesection.\arabic{equation}}
\setcounter{section}{2}
\setcounter{equation}{0}
\setcounter{thm}{0}
%%%%%%%%%%%%%%%%%%%%%%%%%%%%%%%%%%%%%%%%%%%%%%%%%%%%%%%%%%%%%%%%%%%%%%%%%%%%%%%%%%%%%%%%%%%%%%%%%%
\subsection{$\mathbb{L}^2$ convergence of the number of partition sets}
We propose an alternative proof, without martingale, of the $\mathbb{L}^{2}$ convergence of $K_{n}$, 
\begin{equation}
\label{ASCVGLPAPP}
 \lim_{n \rightarrow+\infty} \dE\left[ \left( \frac{K_n}{n^{\alpha}} -S_{\alpha,\theta} \right)^2 \right]=0.
\end{equation}
The proof follows from the Poisson process construction of the EP model \citep[Proposition 9]{Pit(03)}. See also \citet[Chapter 4]{Pit(06)}. From \citet[Equation 3.11]{Pit(06)},
\begin{equation}\label{uncondK}
\P(K_{n}=k)=\frac{\left(\frac{\theta}{\alpha}\right)^{(k)}}{(\theta)^{(n)}}\mathscr{C}(n,k;\alpha).
\end{equation}
Denote $T_{\alpha,\theta}=S_{\alpha,\theta}^{-1/\alpha}$. By combining \citet[Equation 66]{Pit(03)} with \citet[Equation 2.61]{Cha(05)},
\begin{equation}\label{condK}
\P(K_{n}=k\,|\,T_{\alpha,\theta}=t)=V_{n,k}(t)\frac{\mathscr{C}(n,k;\alpha)}{\alpha^{k}},
\end{equation}
where
\begin{displaymath}
V_{n,k}(t)=\frac{1}{\Gamma(n-k\alpha)}\left(\frac{\alpha}{t^{\alpha}}\right)^{k}
\int_{0}^{1}v^{n-1-k\alpha}\frac{f_{\alpha}((1-v)t)}{f_{\alpha}(t)}\ddr v
\end{displaymath}
with $f_{\alpha}$ being the positive $\alpha$-Stable density function. Then, it follows from  \eqref{pitman_div} and \eqref{condK}, as well as the tower property of the conditional expectation, that
\begin{align}
\E[K_{n}S_{\alpha,\theta}]&=\int_0^\infty  \E[K_{n}S_{\alpha,\theta}| S_{\alpha,\theta}=s] f_{S_{\alpha,\theta}}(s)
\ddr s,\notag \\
&=\frac{\Gamma(\theta+1)}{\Gamma\left(\theta/\alpha+1\right)}\int_{0}^{+\infty}
\E[K_{n}| T_{\alpha,\theta}=t]t^{-\alpha-\theta}f_{\alpha}(t)\ddr t,\notag\\
&=\frac{\Gamma(\theta+1)}{\Gamma\left(\theta/\alpha+1\right)}\sum_{k=1}^{n}k\int_{0}^{+\infty}\P(K_{n}=k\,|\,T_{\alpha,\theta}=t) t^{-\alpha-\theta}f_{\alpha}(t)\ddr t,\notag\\
&=\frac{\Gamma(\theta+1)}{\Gamma\left(\theta/\alpha+1\right)}\sum_{k=1}^{n}\frac{k\mathscr{C}(n,k;\alpha)}{\Gamma(n-k\alpha)}I_{n,k}(t),
\label{parz_alt1}
\end{align}
where
\begin{align*}
I_{n,k}(t)&=\int_{0}^{+\infty}t^{-k\alpha-\alpha-\theta}\int_{0}^{1}v^{n-1-k\alpha}f_{\alpha}((1-v)t)\ddr v\ddr t, \\
&= \int_{0}^{+\infty}z^{-k\alpha-\alpha-\theta}\int_{0}^{1}v^{n-1-k\alpha}(1-v)^{k\alpha+\alpha+\theta-1}f_{\alpha}(z)\ddr v\ddr z.
\end{align*}
Since
\begin{equation*}
%\label{parz_alt2}
\int_{0}^{+\infty}z^{-k\alpha-\alpha-\theta}f_{\alpha}(z)\ddr z=\frac{\Gamma\left(\theta/\alpha +k+2\right)}{\Gamma(\alpha+\theta +k\alpha+1)},
\end{equation*}
and
\begin{equation*}
%\label{parz_alt3}
\int_{0}^{1}v^{n-k\alpha-1}(1-v)^{k\alpha+\alpha+\theta-1}\ddr v=\frac{\Gamma(n-k\alpha)\Gamma(k\alpha+\alpha+\theta)}{\Gamma(n+\alpha+\theta)},
\end{equation*}
the expectation \eqref{parz_alt1} reduces to
\begin{align}
\E[K_{n}S_{\alpha,\theta}]&=\frac{\Gamma(\theta+1)}{\Gamma\left(\theta/\alpha+1\right)\Gamma(n+\alpha+\theta)}\sum_{k=1}^{n} 
\frac{k\mathscr{C}(n,k;\alpha)}{(\alpha+\theta +k\alpha)}
\Gamma\left(\theta/\alpha +k+2\right), \notag\\
&=\frac{\Gamma(\theta+n)}{\Gamma(n+\alpha+\theta)}\sum_{k=1}^{n}k\left(\frac{\theta}{\alpha}+k\right)\frac{\left(\frac{\theta}{\alpha}\right)^{(k)}}{(\theta)^{(n)}}\mathscr{C}(n,k;\alpha).
\label{parz_alt4}
\end{align}
Hence, it follows from \eqref{uncondK} and \eqref{parz_alt4} that
\begin{equation}\label{parz_alt5}
\E[K_{n}S_{\alpha,\theta}]=\frac{\Gamma(\theta+n)}{\Gamma(n+\alpha+\theta)}\left(\frac{\theta}{\alpha}\E[K_{n}]+\E[K^{2}_{n}]\right).
\end{equation}
We are now in the position to prove \eqref{ASCVGLPAPP}. We get from \eqref{CVGMEANKNP} 
with $p=1$ and $p=2$ that
\begin{equation}
\label{parz_alt6}
\lim_{n \rightarrow+\infty} \frac{1}{n^{\alpha }}\dE[K_{n}]=\E[S_{\alpha,\theta}]
\hspace{0.8cm} \text{and} \hspace{0.8cm} 
\lim_{n \rightarrow+\infty} \frac{1}{n^{2\alpha}}\dE[K_{n}^2]=\E[S_{\alpha,\theta}^{2}],
\end{equation}
leading to
\begin{equation}
\label{parz_alt7}
\lim_{n\rightarrow+\infty}
\frac{1}{n^{\alpha }}\dE[K_{n}S_{\alpha,\theta}]=\E[S_{\alpha,\theta}^{2}].
\end{equation}
Therefore, we deduce from \eqref{parz_alt6} and \eqref{parz_alt7} that
\begin{align*}
\lim_{n\rightarrow+\infty}\E\left[\left(\frac{K_{n}}{n^{\alpha}}-S_{\alpha,\theta}\right)^{2}\right]
 &=\lim_{n\rightarrow+\infty}
\frac{1}{n^{2\alpha}}\dE[K_{n}^2] -\frac{2}{n^\alpha}\dE[K_{n}S_{\alpha,\theta}]+\E[S_{\alpha,\theta}^{2}], \\
&=\E[S_{\alpha,\theta}^{2}]-2\E[S_{\alpha,\theta}^{2}]+\E[S_{\alpha,\theta}^{2}]=0.
\end{align*}
Similar arguments can be applied in order to show the convergence in $\mathbb{L}^{p}$ of $n^{-\alpha}K_{n}$ to $S_{\alpha,\theta}$, for any integer $p\geq1$, thus providing an alternative proof of \eqref{ASCVGLP}.

\subsection{$\mathbb{L}^2$ convergence of the number of partition subsets of size $r$}
We still rely on the Poisson process construction of the EP model \citep[Proposition 9]{Pit(03)} to propose an alternative proof of the $\mathbb{L}^{2}$ convergence of $K_{r,n}$, 
\begin{equation}
\label{ASCVGLPAPPBLOCK}
 \lim_{n \rightarrow+\infty} \dE\left[ \left( \frac{K_{r,n}}{n^{\alpha}} -p_{\alpha}(r)S_{\alpha,\theta} \right)^2 \right]=0.
\end{equation}
Denote $T_{\alpha,\theta}=S_{\alpha,\theta}^{-1/\alpha}$. By combining \citet[Equation 66]{Pit(03)} with \citet[Proposition 1]{Fav(13)},
\begin{equation}\label{condKr}
\E[K_{r,n}\,|\,T_{\alpha,\theta}=t]=p_{\alpha}(r) (n)_{(r)}\sum_{k=1}^{n}V_{n,k}(t)\frac{\mathscr{C}(n-r,k-1;\alpha)}{\alpha^{k}}
\end{equation}
where
\begin{displaymath}
V_{n,k}(t)=\frac{1}{\Gamma(n-k\alpha)}\left(\frac{\alpha}{t^{\alpha}}\right)^{k}
\int_{0}^{1}v^{n-1-k\alpha}\frac{f_{\alpha}((1-v)t)}{f_{\alpha}(t)}\ddr v
\end{displaymath}
with $f_{\alpha}$ being the positive $\alpha$-Stable density function. Consequently, we can write that
$$
\E[K_{r,n}S_{\alpha,\theta}]=\frac{\Gamma(\theta+1)}{\Gamma\left(\theta/\alpha+1\right)}\int_{0}^{+\infty}
\E[K_{r,n}| T_{\alpha,\theta}=t]t^{-\alpha-\theta}f_{\alpha}(t)\ddr t,
$$
thus obtaining, along lines similar to the computation of $\E[K_{n}S_{\alpha,\theta}]$ in
\eqref{parz_alt4}, that 
\begin{align*}
\E[K_{r,n}S_{\alpha,\theta}]
%=\frac{\Gamma(\theta+1)}{\Gamma\left(\theta/\alpha+1\right)\Gamma(n+\alpha+\theta)} %p_{\alpha}(r) (n)_{(r)}\\
%&\quad\quad\times\sum_{k=1}^{n}\frac{\Gamma\left(1+\theta/\alpha+k\right)
%\Gamma\left(1+\theta/\alpha+k\right)}{\alpha+\theta+k\alpha}\mathscr{C}(n-r,k-1;\alpha)\\
&=\frac{\Gamma(\theta+1)}{\alpha\Gamma(n+\alpha+\theta)}p_{\alpha}(r)(n)_{(r)}\sum_{k=1}^{n}\left(\frac{\theta}{\alpha}+1\right)^{(k)}\mathscr{C}(n-r,k-1;\alpha)\\
&=\frac{\Gamma(\theta+1)}{\alpha\Gamma(n+\alpha+\theta)}p_{\alpha}(r)(n)_{(r)}\frac{\Gamma\left(\theta/\alpha+2\right)}{\Gamma\left(\theta/\alpha+1\right)}\sum_{k=0}^{n-r}\left(\frac{\theta}{\alpha}+2\right)^{(k)}\mathscr{C}(n-r,k;\alpha), 
\end{align*}
which leads to
\begin{equation}\label{parz_alt1_kr}
\E[K_{r,n}S_{\alpha,\theta}]=\frac{\Gamma(\theta+1)}{\Gamma(n+\alpha+\theta)}
p_{\alpha}(r) (n)_{(r)}\left(\frac{\alpha +\theta}{\alpha^2}\right)(\theta+2\alpha)^{(n-r)}.
\end{equation}
We are now in position to prove \eqref{ASCVGLPAPPBLOCK}. We find from \eqref{LP_M3} 
with $p=2$ that
\begin{equation}
\label{LPAN2}
\lim_{n \rightarrow+\infty} \frac{1}{n^{2\alpha}}\dE[K_{r,n}^2]=\big(p_r(\alpha)\big)^2
\E[ \big(S_{\alpha,\theta}\big)^{2}].
\end{equation}
In addition, we deduce from \eqref{parz_alt1_kr} that
\begin{equation}\label{parz_alt3_kr}
\lim_{n\rightarrow+\infty}
\frac{1}{n^{\alpha}}
\E[K_{r,n}S_{\alpha,\theta}]=p_r(\alpha)\left(\frac{\alpha+ \theta}{\alpha^2}\right)\frac{\Gamma(\theta+1)}{\Gamma(2\alpha+\theta)}=p_r(\alpha)\E[S_{\alpha,\theta}^2].
\end{equation}
Therefore, it follows from \eqref{LPAN2} and \eqref{parz_alt3_kr} that
\begin{align*}
&\lim_{n\rightarrow+\infty}\E\left[\left(\frac{K_{r,n}}{n^{\alpha}}-p_r(\alpha)S_{\alpha,\theta}\right)^{2}\right]\\
&
\quad=\lim_{n\rightarrow+\infty}
\frac{1}{n^{2\alpha}}\dE[K_{r,n}^2] -\frac{2p_r(\alpha)}{n^\alpha}\dE[K_{r,n}S_{\alpha,\theta}]+\big(p_r(\alpha)\big)^2
\E[ \big(S_{\alpha,\theta}\big)^{2}], \\
& \\
&\quad =\big(p_r(\alpha)\big)^2\E[S_{\alpha,\theta}^{2}]-2\big(p_r(\alpha)\big)^2\E[S_{\alpha,\theta}^{2}]+\big(p_r(\alpha)\big)^2
\E[ \big(S_{\alpha,\theta}\big)^{2}]=0.
\end{align*}
Similar arguments can be applied to show the convergence in $\mathbb{L}^{p}$ of $n^{-\alpha}K_{r,n}$ to $p_{\alpha}(r)S_{\alpha,\theta}$, for any integer $p\geq1$, thus leading to an alternative proof of \eqref{ASCVGLP_M}.

%%%%%%%%%%%%%%%%%%%%%%%%%%%%%%%%
%%%%%%%%%%%%%%%%%%%%%%%%%%%%%%%%
%%%%%%%%%%%%%%%%%%%%%%%%%%%%%%%%
%%%%%%%%%%%%%%%%%%%%%%%%%%%%%%%%

\section*{Acknowledgement}

The authors would like to thank two anonymous referees for their constructive comments which helped to improve the paper substantially. Stefano Favaro received funding from the European Research Council (ERC) under the European Union's Horizon 2020 research and innovation programme under grant agreement No 817257. Stefano Favaro acknowledge the financial support from the Italian Ministry of Education, University and Research (MIUR), ``Dipartimenti di Eccellenza" grant 2023-2027.

%%%%%%%%%%%%%%%%%%%%%%%%%%%%%%%%
%%%%%%%%%%%%%%%%%%%%%%%%%%%%%%%%
%%%%%%%%%%%%%%%%%%%%%%%%%%%%%%%%
%%%%%%%%%%%%%%%%%%%%%%%%%%%%%%%%


\begin{thebibliography}{9}

\bibitem[Arratia et al.(1992)]{Arr(92)}
\textsc{Arratia, R., Barbour, A.D. and Tavar\'e, S.} (1992). Poisson process approximations for the Ewens sampling formula. \textit{The Annals of Applied Probability} \textbf{2}, 519--535

\bibitem[Arratia et al.(2003)]{Arr(03)}
\textsc{Arratia, R., Barbour, A.D. and Tavar\'e, S.} (2003). \textit{Logarithmic combinatorial structures: a probabilistic approach}. EMS Monographs in Mathematics.

\bibitem[Balocchi et al.(2023)]{Bal(23)}
\textsc{Balocchi, C., Favaro, S. and Naulet, Z.} (2023). Bayesian nonparametric inference for ``species-sampling" problems. \textit{Preprint arXiv:2203.06076}.

\bibitem[Charalambides(2005)]{Cha(05)}
\textsc{Charalambides} (2005) \textit{Combinatorial methods in discrete distributions.} Wiley.

\bibitem[Charalambides(2007)]{Cha(07)}
\textsc{Charalambides, C.A.} (2007). Distributions of random partitions and their applications. \textit{Methodology and Computing in Applied Probability} \textbf{9}, 163--193.

\bibitem[Dolera and Favaro(2020)]{Dol(20)}
\textsc{Dolera, E. and Favaro, S.} (2020). A Berry--Esseen theorem for Pitman's $\alpha$--diversity. \textit{The Annals of Applied Probability} \textbf{30}, 847--869.

\bibitem[Dolera and Favaro(2021)]{Dol(21)}
\textsc{Dolera, E. and Favaro, S.} (2021). A compound Poisson perspective of Ewens-Pitman sampling model. \textit{Mathematics} \textbf{9}, 2820.

\bibitem[Duflo(1997)]{Duf(97)}
\textsc{Duflo, M.} (1997) \textit{Random iterative models}. Springer-Verlag.

\bibitem[Ewens(1972)]{Ewe(72)}
\textsc{Ewens, W.} (1972). The sampling theory or selectively neutral alleles. \textit{Theoretical Population Biology} \textbf{3}, 87--112.

\bibitem[Favaro et al.(2015)]{Fav(15)}
\textsc{Favaro, S. and Feng, S.} (2015). Large deviation principles for the Ewens-Pitman sampling model. \textit{Electronic Journal of Probability} \textbf{20}, 1-26

\bibitem[Favaro et al.(2018)]{Fav(18)}
\textsc{Favaro, S., Feng, S. and Gao. F.} (2018). Moderate deviations for Ewens-Pitman sampling models. \textit{Sankhya A} \textbf{80}, 330--341.

\bibitem[Favaro et al.(2013)]{Fav(13)}  
\textsc{Favaro, S., Lijoi, A. and Pr\"unster, I.} (2013). Conditional formulae for Gibbs-type exchangeable random partitions. \textit{The Annals of Applied Probability} \textbf{23}, 1721--1754.

\bibitem[Favaro and Naulet(2023)]{Fav(23)}  
\textsc{Favaro, S. and Naulet, Z.} (2023). Near-optimal estimation of the unseen under regularly varying tail populations. \textit{Bernoulli} \textbf{29},  3423--3442.

\bibitem[Feng and Hoppe(1998)]{Fen(98)}  
\textsc{Feng, S. and Hoppe, F.M.} (1998). Large deviation principles for some random combinatorial structures in population genetics and Brownian motion. \textit{The Annals of Applied Probability}  \textbf{8}, 975--994.

\bibitem[Hall and Heyde(1980)]{Hal(80)}
\textsc{Hall, M. and Heyde, C.C.} (1980) \textit{Martingale limit theory and its applications.} Academic Press.

\bibitem[Heyde(1977)]{Hey(77)}
\textsc{Heyde, C.C.} (1977). On central limit and iterated logarithm supplements to the martingale convergence theorem. \textit{Journal of Applied Probability} \textbf{14}, 758--775.

\bibitem[Korwar and Hollander(1973)]{Kor(73)}
\textsc{Korwar, R.M. and Hollander, M.} (1973). Contribution to the theory of Dirichlet processes. \textit{The Annals of Probability} \textbf{1}, 705--711.

\bibitem[Oliveira et al.(2022)]{Oli(22)}
\textsc{Oliveira, R.I., Pereira, A. and Ribeiro, R.} (2022). Concentration in the generalized Chinese restaurant process. \textit{Sankhya A} \textbf{80}, 628-670.

\bibitem[Perman et al.(1992)]{Per(92)}
\textsc{Perman, M., Pitman, J. and Yor, M.} (1992). Size-biased sampling of Poisson point processes and excursions. \textit{Probab. Theory Related Fields} \textbf{92}, 21--39.

\bibitem[Pitman(1995)]{Pit(95)}
\textsc{Pitman, J.} (1995). Exchangeable and partially exchangeable random partitions. \textit{Probability Theory and Related Fields} \textbf{102}, 145--158.

\bibitem[Pitman(2003)]{Pit(03)}
\textsc{Pitman, J.} (2003). Poisson-Kingman partitions. In \textit{Science and Statistics: A Festschrift for Terry Speed}, Goldstein, D.R. Eds. Institute of Mathematical Statistics.

\bibitem[Pitman and Yor(1997)]{Pit(97)}
\textsc{Pitman, J. and Yor, M.} (1997). The two parameter Poisson-Dirichlet distribution derived from a stable subordinator. \textit{The Annals of Probability} \textbf{25}, 855--900.

\bibitem[Pitman(2006)]{Pit(06)}
\textsc{Pitman, J.} (2006). \textit{Combinatorial stochastic processes}. Lecture Notes in Mathematics, Springer Verlag.

\bibitem[Zabell(2005)]{Zab(05)}
\textsc{Zabell, S.L.}  (2005). \textit{Symmetry and its discontents: essays on the history of inductive probability}. Cambridge University Press.

\bibitem[Zolotarev(1986)]{Zol(86)}
\textsc{Zolotarev, V.M.}  (1986). \textit{One-dimensional Stable Distributions}. American Mathematical Society.

\end{thebibliography}
\end{document}